\theoremstyle{plain}
\theoremstyle{plain}
\newtheorem{theorem}{Theorem}[section]		
\Crefname{theorem}{Theorem}{Theorems}
\newtheorem{lemma}[theorem]{Lemma}
\Crefname{lemma}{Lemma}{Lemmas}
\newtheorem{claim}[theorem]{Claim}
\Crefname{claim}{Claim}{Claimss}
\Crefname{proposition}{Proposition}{Propositions}
\Crefname{corollary}{Corollary}{Corollaries}
\newtheorem{conjecture}[theorem]{Conjecture}
\Crefname{conjecture}{Conjecture}{Conjectures}
\Crefname{problem}{Problem}{Problems}
\newtheorem{question}[theorem]{Question}
\Crefname{question}{Question}{Questions}
\theoremstyle{remark}
\Crefname{remark}{Remark}{Remarks}
\def\CC{\mathcal{C}}
\def\R{\mathbb{R}}
\def\PP{\mathcal{P}}
\let\emptyset\varnothing
\let\originalleft\left
\let\originalright\right
\renewcommand{\left}{\mathopen{}\mathclose\bgroup\originalleft}
\renewcommand{\right}{\aftergroup\egroup\originalright}
\def\imod#1{\allowbreak\mkern10mu({\operator@font mod}\,\,#1)}
\def \Np {N^+}
\def \Nm {N^-}
\def \Vp {V^+}
\def \Vm {V^-}
\def \ddp {d^+}
\def \ddm {d^-}
\def \dK {\overrightarrow{K}}
\newcommand{\ceil}[1]{
    \lceil #1 \rceil
}
\begin{document}
\title{Immersion of complete digraphs in Eulerian digraphs}
\author{Ant\'onio Gir\~ao}\thanks{Institut f\"ur Informatik, Universit\"at Heidelberg, Germany, E-mail: \texttt{a.girao}@\texttt{informatik.uni-heidelberg.de}.
\thanks{Supported by Deutsche Forschungsgemeinschaft (DFG, German Research Foundation) under Germany’s Excellence Strategy EXC-2181/1 - 390900948 (the Heidelberg STRUCTURES Cluster of Excellence)}}

\author{Shoham Letzter}
\thanks{
	Department of Mathematics, 
	University College London, 
	Gower Street, London WC1E~6BT, UK. 
	Email: \texttt{s.letzter}@\texttt{ucl.ac.uk}. 
	Research supported by the Royal Society.
}

\begin{abstract}

    A digraph $G$ \emph{immerses} a digraph $H$ if there is an injection $f : V(H) \to V(G)$ and a collection of pairwise edge-disjoint directed paths $P_{uv}$, for $uv \in E(H)$, such that $P_{uv}$ starts at $f(u)$ and ends at $f(v)$.
    We prove that every Eulerian digraph with minimum out-degree $t$ immerses a complete digraph on $\Omega(t)$ vertices, thus answering a question of DeVos, McDonald, Mohar, and Scheide. 
\end{abstract}

\maketitle

\section{Introduction}

    The study of the relation between the average degree of a graph and the existence of certain substructures, like minors or topological minors, has a long history. For example, in $1996$, Bollob\'as and Thomason \cite{bollobas1996highly} and, independently, Koml\'os and Szemer\'edi \cite{komlos1996topological} proved that any graph with average degree at least $ct^2$ (for a suitable constant $c$) must contain a subdivision of a clique on $t$ vertices. This is tight up to a constant factor; see K\"uhn and Osthus \cite{kuhn2006extremal} for a sharper bound on the required average degree (the best known lower bound is due to an observation of {\L}uczak).
    
    The analogous statement for digraphs is false. Indeed, there are digraphs with arbitrarily large minimum in- and out-degree which do not contain a subdivision of a complete directed graph on three vertices (see discussion below). Here, a \emph{complete digraph} on $k$ vertices, denoted $\dK_k$, is a digraph on $k$ vertices where between every two vertices there is an edge in both directions. Mader \cite{mader1996topological} asked whether large minimum out-degree guarantees the existence of a subdivision of a transitive tournament of given size, but it is still not known if this is true. 
    
    
    In this note, we consider a weakening of the concept of subdivisions, namely that of an \textit{immersion}. An \emph{immersion} of a (di)graph $H$ into a (di)graph $G$ is an injective mapping $f : V(H) \to V(G)$ and a collection of pairwise edge-disjoint (directed) paths $P_e$, one for each edge $e$ of $H$, such that the path corresponding to an edge $e = uv$ starts at $f(u)$ and ends at $f(v)$. 
    
    For undirected graphs, DeVos, Dvo{\v{r}}{\'a}k, Fox, McDonald, Mohar, and Scheide \cite{devos2014minimum} proved that average degree $200t$ guarantee an immersion of a clique on $t$ vertices. This was improved by Dvo\v{r}\'ak and Yepremyan \cite{dvovrak2018complete} to $11t + 7$ and by Liu, Wang, and Yang \cite{liu2020clique} to $(1 + o(1))t$ for $H$-free graphs, where $H$ is any fixed bipartite graph.
    
    Recently, Lochet \cite{lochet2019immersion} proved that a digraph with high enough minimum out-degree contains an immersion of a transitive tournament. Nevertheless, there are digraphs with arbitrarily large minimum in- and out-degree which do not contain an immersion of $\dK_3$ (see Mader \cite{mader1985degree}, who used a construction of Thomassen \cite{thomassen1985even} of a family of digraphs with arbitrarily large minimum out-degree with no even directed cycle; see also \cite{devos2012} for a different construction). 
    
    An \emph{Eulerian} digraph is a digraph where the in-degree of each vertex $u$ equals the out-degree of $u$.
    In \cite{devos2012,devos2013note}, DeVos, McDonald, Mohar, and Scheide showed that every Eulerian digraph with minimum out-degree at least $t^2$ contains an immersion of a $\dK_t$, and asked whether a linear lower bound on the minimum out-degree would suffice.
    Our main theorem confirms their belief.
    
    \begin{theorem} \label{thm:main}
        There is a constant $\alpha > 0$ such that for every integer $t \ge 1$, every Eulerian digraph with minimum in-degree at least $\alpha t$ contains an immersion of $\dK_t$.
    \end{theorem}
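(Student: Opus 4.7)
The plan is to combine three classical structural ingredients: (i) a reduction of $G$ to a highly edge-connected Eulerian subdigraph, (ii) Mader's directed splitting-off theorem to compress the problem to the terminals, and (iii) a routing argument to realise the immersion.

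\textbf{Step 1. Pass to a highly edge-connected core.} I would first argue that, absorbing a constant factor in $\alpha$, we may assume the local edge-connectivity $\lambda_G(u,v)$ is at least $\beta t$ for every pair $u,v$, where $\beta = \beta(\alpha)>0$ is a constant. The key point is that a small arc-cut $(S, V\setminus S)$ in an Eulerian simple digraph with min out-degree $\alpha t$ forces the smaller side $S$ to be very small: each vertex in $S$ has out-degree $\ge \alpha t$ but at most $|S|-1$ out-neighbours inside $S$, whence $|S|(\alpha t-|S|+1)<\beta t$. We then delete $S$ and perform a few splitting-off operations at the cut boundary to restore Eulerianness, losing only $O(\beta)$ from the min out-degree; iterating yields the desired $G'$.

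\textbf{Step 2. Split off non-terminals.} Fix a set $T=\{v_1,\dots,v_t\}$ of terminals in $G'$, and apply Mader's directed splitting-off theorem iteratively at each $u \in V(G')\setminus T$: pair each in-edge $xu$ with an out-edge $uy$ and replace them by $xy$, choosing the pairing so as to preserve $\lambda(v_i,v_j)$ for every pair $(v_i,v_j)\in T^2$. Eliminating all non-terminals in this way produces an Eulerian multi-digraph $H$ on $T$ that satisfies $\lambda_H(v_i,v_j)\ge \beta t$ for all $i\ne j$ and retains the out-degree $\ge \alpha t$ at each terminal.

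\textbf{Step 3. Find the immersion in $H$.} It remains to pack $t(t-1)$ pairwise edge-disjoint directed paths in $H$, one from $v_i$ to $v_j$ for each ordered pair $i\ne j$. Since $H$ has $\alpha t^2$ edges and the demand graph $\dK_t$ has total degree $2(t-1)$ at each vertex (matching the Eulerian balance of $H$), the right tool is an Eulerian routing/flow theorem: either a directed analogue of Okamura--Seymour, or a direct decomposition of an Euler tour of $H$ into the required paths by appropriately choosing the transition pairing at each visit.

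\textbf{Where the difficulty lies.} The main obstacle is Step 3. A naive Menger-style greedy argument would demand $\lambda_H(v_i,v_j)\gtrsim t^2$, which via Steps 1--2 would only recover the quadratic bound of DeVos--McDonald--Mohar--Scheide. The improvement to a linear bound must exploit the full structure of $H$ simultaneously: its Eulerianness, the perfect match between the degrees of the demand graph $\dK_t$ and the edge symmetry of $H$, and the high multi-edge multiplicities produced by the splitting process (on average $\alpha$ edges between each pair of terminals). I expect that the correct approach either keeps some carefully chosen auxiliary ``hub'' vertices during Step~2 to provide transit capacity between terminals, or invokes a genuinely integer Eulerian routing theorem to certify that the cut condition — which, in an Eulerian digraph with the matching degree parities, becomes necessary and sufficient — is satisfied for the demand graph $\dK_t$ against $H$.
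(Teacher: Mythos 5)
The proposal identifies the crux of the problem honestly, but the gap at Step~3 is deeper than you indicate: the reduction in Steps~1--2 discards information that is essential, so the approach cannot be completed without adding a genuinely new idea.

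The issue is that ``Eulerian $+$ pairwise local edge-connectivity $\ge \beta t$ $+$ min out-degree $\ge \alpha t$'' on a $t$-vertex multi-digraph $H$ does \emph{not} imply an immersion of $\dK_t$. Consider $H$ consisting of $t$ vertices $v_1,\dots,v_t$ with $\alpha t$ parallel arcs from $v_i$ to $v_{i+1}$ (indices mod $t$), and no other arcs. This $H$ is Eulerian, $\alpha t$-regular, and $\lambda_H(v_i,v_j)=\alpha t$ for every ordered pair. Yet any cut $(S,S^c)$ with $S$ an arc of the cycle has $e_H(S,S^c)=\alpha t$, while the immersion of $\dK_t$ demands $|S|\,|S^c| \approx t^2/4$ edge-disjoint paths crossing it. So the cut condition fails by a factor of $t$, and no routing theorem, Eulerian or otherwise, can fix this. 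Mader-type splitting-off only preserves local connectivities; it places no constraint on how arcs redistribute among the terminals, so nothing in Step~2 prevents $H$ from degenerating to this extremal example. The ``directed Eulerian routing theorem'' you hope for is also not a thing: the integer cut condition is not sufficient for edge-disjoint paths in Eulerian digraphs in the generality you need (this fails already in small examples, and the Okamura--Seymour theorem is an undirected planar result). There is also a secondary soft spot in Step~1: the inequality $|S|(\alpha t-|S|+1)<\beta t$ forces $|S|$ to be small \emph{or} larger than $\alpha t$; the latter case is not ruled out, so the claim that small cuts have small sides needs care.

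For contrast, the paper sidesteps exactly this obstruction by never compressing onto $t$ terminals. Instead it adapts Koml\'os--Szemer\'edi sublinear expansion to Eulerian digraphs, extracts a robust directed expander immersed in $D$ (\Cref{lem:find-directed-expander}), and uses expansion to route short edge-disjoint paths one at a time, obtaining a \emph{simple} digraph on $O(t)$ vertices with $\Omega(t^2)$ edges (\Cref{thm:find-dense-immersion}). Because that intermediate digraph is simple, the concentration-of-multiplicities pathology above cannot occur, and the final step (\Cref{thm:from-dense-to-complete}) packs short directed cycles and then invokes the undirected immersion theorem of DeVos et al.\ (\Cref{thm:devos-et-al}) to finish. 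If you wish to pursue the splitting-off route, you would need to keep $\Theta(t)$ or more terminals (not just $t$) and preserve some notion of ``simplicity'' or ``spread'' of the arcs, which is essentially what the expansion machinery accomplishes.
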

    
    \subsection{Overview of the proof}
        
        Our proof consists of three key steps. First, we use a notion of sublinear expansion introduced by Koml\'os and Szemer\'edi \cite{komlos1994topological,komlos1996topological}, which played a key role in recent progress on several long-standing conjectures (see, e.g.\ \cite{fernandez2022nested,haslegrave2021extremal,kim2017proof,liu2017proof,liu2020solution}). Our proof is somewhat unusual in that it applies this notion of expansion to digraphs. 
        To do so, we adapt the notion to our setting and prove that under appropriate assumptions, we can find an immersion of an Eulerian multi-digraph which is an expander with suitable properties; see \Cref{sec:expanders}.
        Next, we show that every such expander, with minimum in-degree $t$, immerses a simple digraph on $O(t)$ vertices with $\Omega(t^2)$ edges. Our proof of this is split into two lemmas, depending on the number of vertices in the expander; see \Cref{sec:find-dense-immersion}.
        Putting these two steps together, along with an additional observation, implies that every Eulerian digraph with minimum in-degree $t$ immerses an Eulerian multi-digraph on $O(t)$ vertices which has $\Omega(t^2)$ edges, ignoring multiplicities.
        The third and final step shows that such a digraph immerses a complete digraph on $\Omega(t)$ vertices. For this, we use the aforementioned result from \cite{devos2014minimum}, which shows that every graph with average degree $t$ immerses a complete graph on $\Omega(t)$ vertices; see \Cref{sec:immerse-clique}.
        We leave the proof of Theorem~\ref{thm:main} to Section~\ref{sec: proofmain}, and mention a few open problems in \Cref{sec:conclusion}.

\section{Preliminary lemmas} \label{sec:prelims}

    Recall that $\dK_k$ is the complete digraph on $k$ vertices. We define $\dK_{k,k}$ to be the digraph on $2k$ vertices that consists of two disjoint independent sets $A$ and $B$ of size $k$ and all edges from $A$ to $B$.
    We denote the in-degree of a vertex $u$ by $\ddm(u)$ and its out-degree by $\ddp(u)$. We will often consider multi-digraphs, in which case $\ddm(u)$ counts the number of in-neighbours of $u$ \emph{with multiplicities}. We denote the in-neighbourhood of $u$ by $\Nm(u)$ and the out-neighbourhood of $u$ by $\Np(u)$. Note that $\Nm(u)$ and $\Np(u)$ are sets, not multi-sets, so $|\Nm(u)|$ counts the number of in-neighbours of $u$ after \emph{ignoring multiplicities}.
    
    Given (multi-)(di)graphs $G$ and $H$, we say that $G$ \emph{immerses} $H$ if there is an injective mapping $f : V(H) \to V(G)$ and a collection of pairwise edge-disjoint paths $P_e$, for $e \in E(H)$, such that for an edge $e = uv$, the path $P_e$ starts at $u$ and ends at $v$. Equivalently, $G$ immerses $H$ if $H$ can be obtained from $G$ by performing a sequence of operations which take a (directed) path $xyz$ and replace its edges by the edge $xz$ (this operation is referred to as a \emph{split} and it is said that $y$ is \emph{split off}).
    
    We emphasise that when talking about a simple digraph, we mean a digraph in which there is at most one copy of $xy$ for every two vertices $x$ and $y$; in particular, a simple digraph may contain edges in both directions between a given pair of vertices.
    
    Logarithms are always taken modulo $2$. We drop rounding signs whenever they are not crucial.

    We recall the following result, due to DeVos, Dvo{\v{r}}{\'a}k, Fox, McDonald, Mohar, and Scheide \cite{devos2014minimum}, about immersions of complete graphs in graphs with large minimum degree.
    
    \begin{theorem}[\cite{devos2014minimum}] \label{thm:devos-et-al}
        Every simple graph with minimum degree at least $200t$ contains an immersion of $K_t$.
    \end{theorem}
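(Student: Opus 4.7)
The plan is to argue by induction on $|V(G)|$, taking $G$ to be a minimum counterexample with $\delta(G) \ge 200t$. The proof proceeds in three stages.

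\emph{Reduction to high edge-connectivity.} First, I would show that $G$ has no small edge cut. If $(A, B)$ is a partition of $V(G)$ with $|E_G(A, B)| < t$, then contracting $B$ to a single vertex yields a smaller multigraph $G'$ in which every vertex of $A$ still has degree at least $200t$. By minimality, $G'$ contains an immersion of $K_t$ with all branch vertices in $A$; this immersion lifts to $G$ by replacing each passage through the contracted vertex by an edge-disjoint path through $B$, which exists by Menger's theorem since the paths so far use only $O(t)$ of the capacity crossing the cut. This contradicts minimality, so every cut has size at least $t$.

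\emph{Splitting off to a dense multigraph on $t$ vertices.} Next, fix a set $S = \{v_1, \dots, v_t\}$ of $t$ vertices in $G$. By Mader's splitting-off theorem (handling parity by temporarily deleting a single edge when needed), we may iteratively pair up and split off edges at each vertex outside $S$ while preserving every local edge-connectivity $\lambda(v_i, v_j)$. Once all vertices outside $S$ are exhausted, we obtain a multigraph $H$ on vertex set $S$ with $\lambda_H(v_i, v_j) \ge 200t$ for every pair, and each edge of $H$ records an edge-disjoint walk in $G$.

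\emph{Extracting the clique immersion from $H$.} In the resulting dense multigraph on $t$ vertices (which has $\Omega(t^2)$ edges in total), I would produce $\binom{t}{2}$ pairwise edge-disjoint paths, one for each pair in $S$, combining Nash-Williams--Tutte tree packing with a careful assignment of paths to pairs. Lifting these paths through the record of splits gives edge-disjoint walks in $G$, which trim to the edge-disjoint paths needed for a $K_t$-immersion.

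The main obstacle is the third stage. Our splitting-off preserves only $\Omega(t)$ local edge-connectivity, so Nash-Williams--Tutte yields only $\Omega(t)$ edge-disjoint spanning trees of $H$, whereas there are $\binom{t}{2}$ pairs to serve. Making the constant $200$ suffice requires exploiting the global density of $H$ more cleverly than naive tree packing, for instance by routing many pairs through each tree or by a direct greedy construction that uses short paths in the dense multigraph $H$. A secondary technical difficulty is tracking parity and connectivity simultaneously through the Mader splitting iteration, to ensure that all $\binom{t}{2}$ of the $\lambda_H(v_i, v_j)$ stay large throughout.
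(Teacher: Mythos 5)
This theorem is not proved in the paper; it is quoted verbatim from DeVos, Dvo\v{r}\'ak, Fox, McDonald, Mohar, and Scheide \cite{devos2014minimum} and used as a black box in \Cref{sec:immerse-clique}. So there is no ``paper's own proof'' to compare your argument against, and the honest standard is simply whether your outline would succeed as a proof of the cited result. As written, it would not.

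The gap you flag in the third stage is real and is the heart of the matter: Mader's splitting-off preserves local edge-connectivities of order $t$, so Nash--Williams--Tutte gives $\Theta(t)$ edge-disjoint spanning trees on the $t$ branch vertices, whereas you need $\binom{t}{2}$ edge-disjoint connecting paths. The dense multigraph $H$ you land on certainly has enough edges in aggregate, but turning ``$\Omega(t^2)$ edges total'' into ``one path for every pair'' is exactly the content of the theorem and is not something that tree packing or a greedy short-path argument delivers for free; without a concrete mechanism this step is missing, not merely technical. There is also an earlier gap you do not mention: in the first stage, after contracting $B$ to a supervertex $v_B$, the resulting multigraph $G'$ has $\deg(v_B)=|E_G(A,B)|<t<200t$, so $G'$ does not satisfy the minimum-degree hypothesis and the induction cannot be invoked. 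Deleting $v_B$ instead does not help, since vertices of $A$ on the boundary then lose up to $t-1$ edges and may drop below $200t$. And the lifting step (rerouting passages through $v_B$ by edge-disjoint paths inside $B$) requires $B$ itself to be sufficiently internally edge-connected, which nothing in the setup guarantees. Each of these would need a separate argument. In short, the broad strategy (minimum counterexample, drive up edge-connectivity, split off to a small dense multigraph) is a reasonable instinct, but both the base of the induction on cuts and the final extraction of the clique immersion are unsupported, so this does not constitute a proof of \Cref{thm:devos-et-al}.
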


    The following is a simple lemma that allows us to restrict our attention to Eulerian multi-digraphs, even after taking immersions.
    
    \begin{lemma} \label{lem:complete-to-eulerian}
        Let $D$ be an Eulerian digraph that immerses a digraph $D'$. Then, $D$ immerses an Eulerian multi-digraph $D''$ on the same vertex set as $D'$ that contains $D'$ as a subdigraph. 
    \end{lemma}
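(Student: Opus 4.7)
The plan is to exploit the equivalence between immersions and sequences of \emph{split} operations (which replace a pair of edges $xy$, $yz$ by a single edge $xz$), together with the elementary fact that splits preserve the Eulerian property: a split at $y$ lowers both $\ddp(y)$ and $\ddm(y)$ by $1$ and leaves the balance at every other vertex unchanged. Hence starting from the Eulerian digraph $D$, every multi-digraph reachable by a sequence of splits is again Eulerian.

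First I would realise the given immersion, witnessed by $f : V(D') \to V(D)$ and the paths $P_{uv}$, as an explicit sequence of splits: for each edge $uv \in E(D')$, walk along $P_{uv} = f(u) w_1 \cdots w_{\ell - 1} f(v)$ and successively split off $w_1, w_2, \ldots, w_{\ell - 1}$ using the edges of $P_{uv}$. After this phase, each $P_{uv}$ has been collapsed into the single edge $f(u)f(v)$. Because the paths are pairwise edge-disjoint these splits do not interfere, so the resulting multi-digraph $D_1$ is Eulerian, contains (via $f$) every edge of $D'$ on the subset $f(V(D'))$, and additionally retains every edge of $D$ not used by any $P_{uv}$.

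Then I would eliminate every vertex $y \in V(D) \setminus f(V(D'))$ by further splits. Since $D_1$ is Eulerian we have $\ddp_{D_1}(y) = \ddm_{D_1}(y) =: k$, so the $k$ remaining in-edges at $y$ can be paired arbitrarily with the $k$ out-edges and split off one pair at a time until $y$ becomes isolated. Crucially, these later splits never touch an edge with both endpoints in $f(V(D'))$, so the copy of $D'$ created in the first phase survives unchanged. After processing every $y$ outside $f(V(D'))$, the non-isolated vertices of the resulting multi-digraph lie inside $f(V(D'))$; identifying $f(V(D'))$ with $V(D')$ gives a multi-digraph $D''$ which is Eulerian, contains $D'$, and is obtained from $D$ by splits, that is, is immersed by $D$. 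I do not anticipate a serious obstacle: the proof reduces to the two immediate facts that splits preserve the Eulerian property and that a balanced vertex can always be split off.
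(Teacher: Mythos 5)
Your approach differs from the paper's and is, in my view, somewhat cleaner. The paper also starts from the multi-digraph $D_0$ obtained from $D$ by collapsing each path $P_{uv}$ to the edge $f(u)f(v)$ (your $D_1$), but then proceeds \emph{additively}: it maintains a pair $(D_i,D_i')$ with $D'\subseteq D_i'\subseteq D_i$ and $D_i$ Eulerian, repeatedly locates a directed path in $D_i\setminus D_i'$ from a vertex whose in-degree in $D_i'$ exceeds its out-degree to a vertex with the opposite imbalance, replaces that path by a single edge in $D_{i+1}$, and adds the edge to $D_{i+1}'$; the potential $\sum_u \left|\ddp_{D_i'}(u)-\ddm_{D_i'}(u)\right|$ strictly decreases, so the process ends with an Eulerian $D_i'$. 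Your \emph{subtractive} route — split off everything outside $f(V(D'))$ in one go — avoids having to locate balancing paths and track a potential, at the cost of a small technical point.

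That point is a genuine gap as written: ``the $k$ remaining in-edges at $y$ can be paired arbitrarily with the $k$ out-edges and split off one pair at a time'' is not quite right. A split replaces a directed \emph{path} $xyz$ (so $x\neq z$) by the edge $xz$; it cannot be applied to a pair $xy,\,yx$, which would produce a loop, and a loop is not something $D$ can immerse (there is no directed path from a vertex to itself, and a loop would also break the degree bound deduced from immersion later in the paper). You also cannot always choose the pairing to avoid such collisions: if all remaining in-edges at $y$ come from a single vertex $a$ and all remaining out-edges go to $a$, every pairing collides. The fix is immediate — whenever you would pair $xy$ with $yx$, delete both edges instead of splitting (equivalently, allow the split to form a loop and delete all loops at the end). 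Deleting a $2$-cycle, or a loop, keeps the multi-digraph Eulerian and does not touch the embedded copy of $D'$, and the remaining edges still trace back to pairwise edge-disjoint walks (hence paths, after short-cutting) in $D$. With that amendment your argument is complete.
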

    
    \begin{proof}
        For each edge $e = xy$ in $D'$ let $P(e)$ be a directed path from $x$ to $y$ in $D$ such that the paths $P(e)$ with $e \in E(D')$ are pairwise edge-disjoint; such paths exist by definition of immersion. Let $D_0$ be the multi-digraph obtained from $D$ by replacing $P(e)$ by $e$ for each edge $e$ in $D'$; then $D_0$ is Eulerian. Write $D_0' = D'$. 
        
        We define multi-digraphs $D_i$ and $D_i'$, for $i \ge 1$, such that $D_i$ is Eulerian and $D_{i-1}'\subseteq D_i' \subseteq D_i$, as follows. 
        Suppose that $D_1, \ldots, D_i$ have been defined.
        If $D_i'$ is Eulerian, stop. Otherwise, let $x$ be a vertex in $D_i'$ whose out-degree is larger than its in-degree. Because $D_i$ is Eulerian and $D_i' \subseteq D_i$, there is a path $P$ in $D_i \setminus D_i'$ that ends at $x$ and starts at a vertex of $D'_i$ whose in-degree in $D_i'$ is larger than its out-degree. Let $P_i$ be a minimal subpath of $P$ that starts at a vertex with out-degree large than in-degree, and ends at a vertex with out-degree larger than in-degree in $D_i'$; denote its start and end vertices by $x_i$ and $y_i$. Form $D_{i+1}$ by replacing $P_i$ by the edge $x_i y_i$ and form $D_{i+1}'$ by adding $x_iy_i$ to $D_i'$. Then $D_i' \subseteq D_{i+1}' \subseteq D_{i+1}$ and $D_{i+1}$ is Eulerian. 
        
        Note that the sum $\sum_{u \in V(D_i)} |\ddp(u) - \ddm(u)|$ decreases as $i$ increases. Thus for some $i$ this sum will be $0$ and the process will stop. Then $D_i'$ is an Eulerian multi-digraph that contains $D'$ as a subdigraph and is contained in $D$ as an immersion.
    \end{proof}
    
    The next lemma will allow us to focus on \emph{regular} Eulerian digraphs.
    
    \begin{lemma} \label{lem:get-regular-Eulerian}
        Let $D$ be a simple Eulerian digraph with minimum in-degree at least $2d$. Then either $D$ immerses a simple $2d$-regular Eulerian digraph, or it immerses $\dK_{d,d}$. 
    \end{lemma}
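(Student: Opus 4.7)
The plan is to build a sequence of simple Eulerian digraphs $D=D_0, D_1, \ldots$, each immersed in $D$ and each with minimum in-degree at least $2d$, by repeatedly performing split-off operations. At each stage, if $D_i$ is not $2d$-regular, pick a vertex $y$ with $\ddp(y)=\ddm(y)\ge 2d+1$ and look for an \emph{admissible} pair, meaning $x\in \Nm(y)$ and $z\in \Np(y)$ with $x\ne z$ and $xz\notin E(D_i)$. Splitting off $xyz$ (removing $xy$ and $yz$, adding $xz$) yields a simple Eulerian digraph $D_{i+1}$ in which only $y$'s in- and out-degrees change, each decreasing by one. The sum of in-degrees strictly decreases, so the process terminates in finitely many steps; and since we only ever reduce the degree of a vertex whose degree is strictly above $2d$, the minimum in-degree stays at least $2d$ throughout.

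If the process halts with a $2d$-regular $D_i$, we are done. Otherwise it halts at some $y$ with $\ddm(y)\ge 2d+1$ for which no admissible pair exists, meaning every $(x,z)\in \Nm(y)\times \Np(y)$ with $x\ne z$ already satisfies $xz\in E(D_i)$. From such a $y$ I plan to extract a $\dK_{d,d}$ subdigraph as follows. Write $N^- = \Nm_{D_i}(y)$, $N^+ = \Np_{D_i}(y)$, each of size at least $2d+1$. Choose disjoint $A\subseteq N^-$ and $B\subseteq N^+$ of size $d$ by first filling $A$ greedily from $N^-\setminus N^+$ and then topping it up from $N^-\cap N^+$ if necessary; a short count using $|N^-\cup N^+|\ge \max(|N^-|,|N^+|)\ge 2d$ shows $|N^+\setminus A|\ge d$, so $B$ can be picked. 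Since $A\cap B=\emptyset$, every pair $(a,b)\in A\times B$ has $a\ne b$, hence $ab\in E(D_i)$, and so $D_i$ contains $\dK_{d,d}$ as a subdigraph (which is therefore immersed in $D$).

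The principal point requiring care is the combinatorial extraction of $\dK_{d,d}$ in the second paragraph, in particular the case where $N^-\cap N^+$ is large, which is handled by the greedy choice above. The rest is a routine monovariant argument, and compatibility with the definition of immersion is automatic since the split-off operation is precisely the inverse of the path-replacement step used to realize immersions.
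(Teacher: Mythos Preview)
Your proof is correct and follows essentially the same approach as the paper: both iterate split-offs at a vertex of degree exceeding $2d$ (the paper phrases this as taking a minimal simple Eulerian immersion with minimum in-degree at least $2d$), and both observe that when no split-off is possible one finds a $\dK_{d,d}$. The only cosmetic difference is that the paper fixes disjoint $d$-subsets $N^-\subseteq \Nm(u)$, $N^+\subseteq \Np(u)$ \emph{before} checking for a missing edge, so the $\dK_{d,d}$ is immediate, whereas you check all pairs first and then extract the disjoint $d$-sets via your greedy argument.
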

    
    \begin{proof}
        Let $D'$ be a minimal (in terms of the number of edges) simple Eulerian digraph with minimum in-degree at least $2d$ which is immersed by $D$. If $D'$ is $2d$-regular, we are done, so suppose there is a vertex $u$ with in- and out-degree at least $2d+1$. Let $\Np \subseteq \Np(u)$ and $\Nm \subseteq \Nm(u)$ be disjoint sets of size $d$. Suppose that there exist $x \in \Np$ and $y \in \Nm$ such that $xy$ is not an edge in $D'$. Form $D''$ by removing the edges $xu$ and $uy$ from $D'$ and adding $xy$.
        Then $D''$ is Eulerian, it has minimum in-degree at least $2d$, and it is immersed by $D'$, implying that it is immersed by $D$. Since $D''$ has fewer edges than $D'$, this is a contradiction to the minimality of $D'$.
        It follows that $xy \in E(D')$ for every $x \in \Np$ and $y \in \Nm$. Hence, $D'$ contains a copy of $\dK_{d,d}$, implying that $D$ immerses $\dK_{d,d}$, as required.
    \end{proof}

\section{Expanders} \label{sec:expanders}
    
    In this section we introduce several notions of expanders. These are variants of the notions of `sparse expanders' introduced by Koml\'os and Szemer\'edi \cite{komlos1994topological,komlos1996topological} and `robust expanders' introduced by Haslegrave, Kim and Liu \cite{haslegrave2021extremal}.
    
    \subsection{Expanders in undirected graphs} \label{subsec:expanders-undirected}
    
        For $t > 0$ let $\rho_{t}$ be the function defined as follows (when $t$ is clear from the context, we often omit the subscript). 
        \begin{equation*}
            \rho_t(x) = 
            \left\{
                \begin{array}{ll}
                    0 & \text{if $x < t$}  \\
                    \frac{1}{256(\log (4x/t))^2} & \text{if $x \ge t$}. 
                \end{array}
            \right.
        \end{equation*}
        
        Denote the average degree of a graph $G$ by $d(G)$. 
        A graph $G$ is called a \emph{$t$-edge-expander} if every subset $X \subseteq V(G)$ with $|X| \le |G|/2$ satisfies $e_G(X, X^c) \ge 32d(G) \cdot \rho_t(|X|) |X|$. Similarly, $G$ is called a \emph{robust $t$-vertex-expander} if every subset $X \subseteq V(G)$ with $|X| \le |G|/2$ and subgraph $F \subseteq G$ with $e(F) \le d(G)\rho_t(|X|)|X|$ satisfy $|N_{G \setminus F}(X)| \ge 2\rho_t(|X|)|X|$.
        Haslegrave, Kim, and Liu \cite{haslegrave2021extremal} use a similar notion to the latter one (up to a different choice of constants); the former notion is more convenient for our application.
        
        The following lemma is a variant of similar lemmas such as Lemma 2.3 in \cite{komlos1996topological} and Lemma 3.2 in \cite{haslegrave2021extremal}. We prove it in \Cref{appendix} for completeness. Our proof is similar to the proofs of the aforementioned lemmas and also draws inspiration from the proof of Lemma 2.7 in \cite{jiang2021rainbow}. We note that we do not require the third item, but we keep it for future reference. 
        
        \begin{lemma} \label{lem:find-expander}
            Let $t > 0$ and let $G$ be a graph. Then there is a subgraph $H \subseteq G$ such that
            \begin{itemize}
                \item
                    $H$ has average degree at least $d(G)/2$ and minimum degree at least $d(G)/4$,
                \item
                    $H$ is a $t$-edge-expander,
                \item
                    $H$ is a robust $t$-vertex-expander.
            \end{itemize}
        \end{lemma}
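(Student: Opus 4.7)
The plan is to follow the classical Komlós--Szemerédi sparse expansion argument, adapted to deliver the three conditions simultaneously. Write $d = d(G)$. I would fix a family $\mathcal{F}$ of subgraphs $H' \subseteq G$ satisfying a refined lower bound of the form $e(H') \ge \tfrac{d}{4}|H'| + \psi(|H'|)$, where $\psi$ is a potential chosen so that whenever one passes from $H'$ to a proper subgraph (either by removing a low-degree vertex, or by deleting one side of a bad edge-cut), the resulting loss of edges is absorbed by the change in $\psi$. Since the sums defining $\psi$ are essentially $\sum_k \rho_t(k)$, which converges thanks to the $1/\log^2$ factor in $\rho_t$, the potential stays bounded and $G$ itself belongs to $\mathcal{F}$. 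Choose $H \in \mathcal{F}$ of minimum vertex count, breaking ties by maximising $e(H)$.

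The first two conditions then follow from extremality. The bound $d(H) \ge d/2$ is immediate from $H \in \mathcal{F}$. For the minimum-degree bound, if some $v \in V(H)$ had $d_H(v) < d/4$ then $H - v$ would remain in $\mathcal{F}$, since the drop in $\psi$ when going from $|H|$ to $|H|-1$ vertices is designed to absorb a loss of at most $d/4$ edges; this contradicts the minimality of $|V(H)|$. For the $t$-edge-expander property, suppose a bad set $X \subseteq V(H)$ with $|X| \le |H|/2$ and $e_H(X, X^c) < 32 d(H) \rho_t(|X|) |X|$ existed. Then the loss of at most $32 d\rho_t(|X|)|X|$ edges is exactly the quantity $\psi$ was designed to absorb, so one of $H[X]$ or $H[X^c]$ still lies in $\mathcal{F}$, once again contradicting minimality.

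For robust $t$-vertex-expansion I would argue that it follows from the edge-expansion already established, with a little extra care. Suppose there were witnesses $X$ and $F \subseteq H$ with $e(F) \le d(H) \rho_t(|X|) |X|$ and $|N_{H \setminus F}(X)| < 2 \rho_t(|X|) |X|$, and set $Y = X \cup N_{H \setminus F}(X)$. Every edge of $H$ joining $X$ to $V(H) \setminus Y$ lies in $F$, so $e_H(X, V(H) \setminus Y) \le e(F) \le d(H) \rho_t(|X|) |X|$. Since $|Y|$ exceeds $|X|$ only by a factor $1 + 2\rho_t(|X|)$ and $\rho_t$ is slowly varying, either $Y$ or $V(H) \setminus Y$ (whichever has size at most $|H|/2$) becomes a bad set for edge-expansion in $H$, contradicting the property proved in the previous paragraph. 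The main obstacle throughout is calibrating $\psi$ so that the edge-expansion step closes off cleanly; this is the heart of the Komlós--Szemerédi argument and relies on the convergence of $\sum 1/(n \log^2 n)$, which bounds the cumulative potential loss across all possible iterations.
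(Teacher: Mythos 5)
Your reduction of robust vertex-expansion to edge-expansion has a genuine gap. You set $Y = X \cup N_{H\setminus F}(X)$ and correctly observe that all edges from $X$ to $V(H)\setminus Y$ lie in $F$. But to conclude that $Y$ (or $Y^c$) violates edge-expansion you must bound $e_H(Y,Y^c)$, and the edges leaving $N_{H\setminus F}(X)\setminus X$ towards $Y^c$ are not in $F$ and are not controlled: no upper bound on the maximum degree of $H$ is available, so these edges can vastly outnumber $e(F)$. The paper does not attempt this reduction. Instead it derives, directly from the extremal choice, a density drop $d(H[S]) \le d(H)\bigl(1-64\rho_t(|S|)\bigr)$ for $t\le|S|\le\tfrac{2}{3}|H|$, pairs it with the lower bound $e(H[T]) \ge \tfrac{d(H)|S|}{2}-e(F)$ where $T=S\cup N_{G\setminus F}(S)$ (every edge meeting $S$ and not in $F$ has both ends in $T$), and compares the two estimates to force $|T|\ge(1+14\rho_t(|S|))|S|$.

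There is also a calibration problem in your extremal set-up. For the minimum-degree step you need $\psi$ nondecreasing, so that $e(H-v)\ge\tfrac{d}{4}(|H|-1)+\psi(|H|-1)$ when $d_H(v)<d/4$; for the edge-expansion step you need the strong superadditivity $\psi(|H|)-\psi(|X|)-\psi(|X^c|)\ge 32d\rho_t(|X|)|X|$. These pull against each other: with $\rho_t$ decreasing, the increment $\psi(|H|)-\psi(|X^c|)$ available over $|X|$ consecutive integers near $|H|$ is governed by $\rho_t$ evaluated near $|H|$, which is \emph{smaller} than $\rho_t(|X|)$, so an additive potential of the kind you sketch does not obviously give both properties. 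The paper instead maximises $\phi(H)=d(H)\bigl(1+\gamma(|H|)\bigr)$ with $\gamma(x)=\min\{1,1/\log(2x/t)\}$ over all subgraphs; what drives the argument is the pointwise gap between $\gamma(s)$ and $\gamma(3s/2)$, of order $1/(\log(4s/t))^2$, and there is no iteration over which potential loss accumulates, so the convergence of $\sum 1/(n\log^2 n)$ is not what is being used. Your first two bullet points can be recovered in roughly the spirit you describe once the potential is chosen correctly, but the third requires an argument along the paper's lines rather than a reduction to edge-expansion.
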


    \subsection{Expanders in Eulerian digraphs} \label{subsec:expanders-eulerian}
    
        We now introduce analogous notions of expansion for digraphs (albeit with slightly different parameters).
    
        For a (multi-)digraph $D$, denote by $d(D)$ the average degree of $D$, counting multiplicities and ignoring directions. In other words, $d(D) = 
        e(D)/|D|$.
        Say that a multi-digraph $D$ is a \emph{directed $t$-edge-expander} if every subset $X \subseteq V(D)$ with $|X| \le |D|/2$ satisfies $e(X^c, X), e(X, X^c) \ge 4d(D)\rho_t(|X|)|X|$. Similarly, $D$ is a \emph{robust directed $t$-vertex-expander} if every subset $X \subseteq V(D)$ and subgraph $F$ with $e(F) \le d(D)\rho_t(|X|)|X|$ satisfy $|N^-_{D\setminus F}(X)|, |N^+_{D\setminus F}(X)| \ge \rho_t(|X|)|X|$.
        
        The following lemma is an analogue of \Cref{lem:find-expander} for simple directed Eulerian graphs.
        
        \begin{lemma} \label{lem:find-directed-expander}
            Let $t > 0$ and let $D$ be a simple $d$-regular Eulerian oriented graph (so every vertex has in-degree $d$). Then $D$ immerses an Eulerian multi-digraph $D'$ with the following properties.
            \begin{itemize}
                \item
                    The simple undirected graph obtained from $D'$ by ignoring directions and multiplicities has average degree at least $d/2$,
                \item
                    $D'$ has minimum in- and out-degree at least $d/8$ (taking into account multiplicities),
                \item
                    $D'$ is a directed $t$-edge-expander,
                \item
                    $D'$ is a robust directed $t$-vertex-expander.
            \end{itemize}
        \end{lemma}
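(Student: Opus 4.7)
The plan is to reduce to the undirected expander lemma (\Cref{lem:find-expander}) and then use \Cref{lem:complete-to-eulerian} to restore the Eulerian property. Since $D$ is oriented and $d$-regular, its underlying simple undirected graph $G$ has $|V(D)| \cdot d$ edges and thus $d(G) = 2d$. Applying \Cref{lem:find-expander} to $G$ with parameter $t$ produces a subgraph $H \subseteq G$ with $d(H) \geq d$, minimum degree at least $d/2$, that is both a $t$-edge-expander and a robust $t$-vertex-expander. Let $D_H$ be the subdigraph of $D$ whose edges are those of $H$ with the orientations they inherit from $D$; since $D_H \subseteq D$, the digraph $D$ trivially immerses $D_H$, and \Cref{lem:complete-to-eulerian} upgrades this to an Eulerian multi-digraph $D'$ on $V(H)$ that contains $D_H$ and is still immersed by $D$. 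This $D'$ will be our candidate.

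The first two required properties of $D'$ are straightforward. The underlying simple undirected graph of $D'$ contains $H$, so it has average degree at least $d(H) \geq d \geq d/2$. In the construction of \Cref{lem:complete-to-eulerian}, each added edge balances one unit of in/out imbalance at each endpoint, so $\ddp_{D'}(v) = \ddm_{D'}(v) = \max(\ddp_{D_H}(v), \ddm_{D_H}(v))$; this quantity lies between $d_H(v)/2 \geq d/4$ and $d$ (the upper bound using $d$-regularity of $D$). Summing, $e(D') \leq 2 e(H)$ and hence $d(D') \leq d(H)$.

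For the directed edge expansion, fix $X \subseteq V(D')$ with $|X| \leq |D'|/2$. Each edge of $H$ between $X$ and $X^c$ corresponds to a single directed edge of $D_H \subseteq D'$ crossing the cut in one of the two directions, so
\[
e_{D'}(X, X^c) + e_{D'}(X^c, X) \geq e_H(X, X^c) \geq 32 \, d(H) \, \rho_t(|X|) \, |X|.
\]
Since $D'$ is Eulerian the two terms on the left are equal, so each is at least $16 \, d(H) \, \rho_t(|X|) \, |X| \geq 4 \, d(D') \, \rho_t(|X|) \, |X|$. For the robust vertex expansion, given $F \subseteq D'$ with $e(F) \leq d(D') \rho_t(|X|) |X| \leq d(H) \rho_t(|X|) |X|$, subtracting $e(F)$ from the preceding bound yields $e_{D' \setminus F}(X, X^c) \geq 15 \, d(H) \, \rho_t(|X|) \, |X|$. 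Since every vertex has $\ddm_{D'} \leq d$ counting multiplicities, this forces $|\Np_{D' \setminus F}(X)| \geq 15 \, \rho_t(|X|) \, |X| \geq \rho_t(|X|) \, |X|$, and the bound on $|\Nm_{D' \setminus F}(X)|$ follows symmetrically from $e_{D' \setminus F}(X^c, X)$.

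The delicate point is that the undirected vertex expansion of $H$ does not directly transfer to $D'$: an $H$-edge from $X$ to $v$ may be oriented into or out of $v$, so the undirected neighbourhood of $X$ does not control $\Np(X)$ and $\Nm(X)$ separately. The workaround is to derive directed vertex expansion from directed edge expansion, leveraging the Eulerian symmetry $e_{D'}(X, X^c) = e_{D'}(X^c, X)$ and the multiplicity bound $\ddm_{D'}(v), \ddp_{D'}(v) \leq d$ which comes for free from the $d$-regularity of $D$.
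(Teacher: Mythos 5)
Your proof is correct and follows essentially the same route as the paper's: pass to the undirected underlying graph, extract an expander via \Cref{lem:find-expander}, re-orient it inside $D$, and repair the Eulerian property with \Cref{lem:complete-to-eulerian}, then transfer edge-expansion to both directions via the Eulerian symmetry $e_{D'}(X,X^c)=e_{D'}(X^c,X)$ and deduce vertex-expansion from edge-expansion using the $\le d$ cap on in/out-degrees. One small remark: your degree identity $\ddp_{D'}(v)=\ddm_{D'}(v)=\max(\ddp_{D_H}(v),\ddm_{D_H}(v))$ is a property of the \emph{construction} in \Cref{lem:complete-to-eulerian}, not of its statement; the paper sidesteps this by simply noting that $D'$ is immersed by $D$, so distinct $D'$-edges leaving $v$ use distinct $D$-edges leaving $v$ and hence $\ddp_{D'}(v)\le\ddp_D(v)=d$ (and likewise for in-degrees), which, combined with $D_H\subseteq D'$ and $D'$ Eulerian, gives all the degree bounds you need without opening the black box.
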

        
        \begin{proof}
            Let $G$ be the undirected graph obtained from $D$ by ignoring directions; so $d(G) \ge d$ (if $xy$ and $yx$ are both in $D$, then we count $xy$ only once in $G$). By \Cref{lem:find-expander}, there is a subgraph $G' \subseteq G$ with average degree at least $d(G)/2$  and minimum degree at least $d(G)/4$, which is a $t$-edge-expander. Let $D'$ be a subgraph of $D$ which is an orientation of $G'$. Apply \Cref{lem:complete-to-eulerian} to find an Eulerian multi-digraph $D''$ which contains $D'$ as a subgraph and is contained in $D$ as an immersion. Observe that $D''$ has maximum in- and out-degree at most $d$, implying $d(D'') \le 2d \le 2d(G)$. We show that $D''$ satisfies the required conditions. 
            
            The first item follows from $G'$ having average degree at least $d(G)/2 \ge d/2$.
            
            Note that every vertex has either in- or out-degree at least $d(G)/8 \ge d/8$ in $D'$. Since $D''$ is an Eulerian digraph containing $D'$, the second item holds. 
            
            Let $X \subseteq V(D'')$. Then $e_{G'}(X, X^c) \ge 32d(G')\rho(|X|)|X|$, so one of $e_{D'}(X, X^c)$ and $e_{D'}(X^c, X)$ is at least $16d(G')\rho(|X|)|X|$. Since $D''$ is an Eulerian digraph that contains $D'$, we have $e_{D''}(X, X^c) = e_{D''}(X^c, X) \ge 16d(G')\rho(|X|)|X| \ge 8d(G)\rho(|X|)|X| \ge 4d(D'')\rho(|X|)|X|$. This shows that $D''$ is a $t$-edge-directed expander, as required for the third item.
            
            Let $F$ be a subgraph of $D''$ with $e(F) \le d(D'')\rho(|X|)|X|$ ($F$ can be a multi-digraph). Let $N = N^+_{D'' \setminus F}(X)$. Then $e_{D''}(X, N) \ge e_{D''} (X, X^c) - e(F) \ge 3d(D'')\rho(|X|)|X| > d \cdot \rho(|X|)|X|$, using $d(D'') \ge d(G') \ge d(G)/2 \ge d/2$.
            Since $D''$ has maximum in-degree at most $d$, it follows that 
            \begin{equation*}
                |N^+_{D'' \setminus F}(X)| 
                = |N| 
                \ge \frac{e_{D''}(X, N)}{d} 
                \ge \rho(|X|)|X|.
            \end{equation*}
            A symmetric argument shows $|N^-_{D'' \setminus F}(X)| \ge \rho(|X|)|X|$. This establishes vertex-expansion, as required for the fourth item.
        \end{proof}
    
    \subsection{Connecting sets in directed expanders} \label{subsec:expanders-connecting}
    
        The following lemma proves that robust directed vertex-expanders possess the following property, similarly to their undirected versions: for every two relatively large sets, there is a short directed path joining the two sets and avoiding a small set of `forbidden' edges. The proof is simple and similar to its undirected analogue (see, e.g.\ \cite{komlos1996topological}). We include the proof in \Cref{appendix} for completeness.
        
        \begin{lemma} \label{lem:short-paths-diam}
            Let $D$ be a multi-digraph which is a robust directed $t$-vertex-expander on $n$ vertices, where $n \ge 2^8t$. Let $X$ and $Y$ be two sets of size at least $x$, where $x \ge t$, and let $F$ be a subgraph of $D$ with at most $d(D) \rho(x)x$ edges. Then there is a directed path from $X$ to $Y$ avoiding $F$ of length at most $1600 (\log (n/t))^3$.
        \end{lemma}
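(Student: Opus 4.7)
My plan is to run two simultaneous breadth-first searches, one forward from $X$ and one backward from $Y$, in both cases using only edges of $D \setminus F$. Explicitly, I would set $B_0 = X$ and $B_{i+1} = B_i \cup N^+_{D \setminus F}(B_i)$, and symmetrically $C_0 = Y$ and $C_{i+1} = C_i \cup N^-_{D \setminus F}(C_i)$. Any intersection $B_i \cap C_j \ne \emptyset$, or any edge of $D \setminus F$ from $B_i$ to $C_j$, immediately yields a directed path from $X$ to $Y$ of length at most $i + j + 1$ avoiding $F$ by splicing the two BFS trees.

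The first step would be to show that each BFS iteration expands the current ball by a factor of at least $1 + \rho(|B_i|)$. To apply the robust $t$-vertex-expander property to $B_i$ with the fixed forbidden subgraph $F$, I need $e(F) \le d(D)\rho(|B_i|)|B_i|$. The hypothesis gives $e(F) \le d(D)\rho(x) x$, and I would verify by direct calculation that $\rho(y) y$ is, up to a harmless absolute constant, non-decreasing in $y$ for $y \ge t$, so the required inequality persists throughout the BFS. A standard Koml\'os--Szemer\'edi doubling argument, using that $(1+\rho)^{1/\rho} \ge 2$ for all $\rho \in (0,1]$, would then show that each doubling of the ball size takes at most $\lceil 1/\rho(|B_i|) \rceil \le 256 (\log(4|B_i|/t))^2$ iterations. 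Summing over the at most $\log n$ doublings required to reach $|B_k| \ge n/2$, and using $n \ge 2^8 t$ to absorb constants, yields $k \le 800(\log(n/t))^3$; an identical bound applies to $C_l$.

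For the final step, I would continue the BFS on both sides until $|B_k|, |C_l| \ge n/2$, and then one further iteration on one side gives $|B_{k+1}| + |C_l| > n$, forcing $B_{k+1} \cap C_l \ne \emptyset$. Splicing the corresponding BFS paths then yields a directed $X$-to-$Y$ path in $D \setminus F$ of length at most $k + l + 1 \le 1600(\log(n/t))^3$. The main technical obstacle I anticipate is the careful accounting of constants: the slight non-monotonicity of $\rho(y) y$ near $y = t$ costs only a constant factor, and this together with the various $\log$-versus-$\log(4\cdot)$ and rounding slacks should be absorbable by the hypothesis $n \ge 2^8 t$ (so that $\log(n/t) \ge 8$), leaving ample room inside the $1600(\log(n/t))^3$ target.
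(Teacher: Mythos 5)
Your proposal is correct and follows essentially the same route as the paper's proof: both grow a forward BFS ball from $X$ and a backward BFS ball from $Y$ inside $D \setminus F$, drive the growth with the robust vertex-expansion property, and stop once both balls exceed $n/2$ and hence meet. The only difference is bookkeeping: the paper bounds the expansion ratio uniformly by $\rho(|X_i|) \ge \frac{1}{400(\log(n/t))^2}$ whenever $|X_i| \le n/2$ and multiplies out directly, while you phrase the same estimate as a doubling argument (iterations per doubling times number of doublings); both land comfortably inside $1600(\log(n/t))^3$.
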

      
\section{Immersions in Eulerian digraphs with high degree} \label{sec:find-dense-immersion}

    Our aim in this section is to prove \Cref{thm:find-dense-immersion}, which states that simple Eulerian digraphs with minimum degree at least $ck$, for a large constant $k$, immerse a dense simple digraph on at least $k$ vertices. The main works goes into showing that directed expanders with suitable properties immerse a dense subgraph of $\dK_{k,k}$. This is achieved in \Cref{lem:immersion-large-n}, which will be applied to relatively large expanders, and \Cref{lem:immersion-small-n}, which will be applied to smaller expanders.
    
    \subsection{Immersions in large directed expanders} \label{subsec:immersion-large-expanders}
    
        \begin{lemma} \label{lem:immersion-large-n}
            Let $k \ge 1$ be a sufficiently large integer and let $n > 4 k(100k)^{(\log \log n)^6}$. Let $D$ be a multi-digraph with the following properties: it is a robust directed $k$-vertex-expander on $n$ vertices; it has maximum in- and out-degree at most $100k$; and the graph obtained from $D$ by ignoring directions and multiplicities has at least $100kn$ edges. Then $D$ immerses $\dK_{k,k}$.
        \end{lemma}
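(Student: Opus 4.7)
The plan is to greedily construct the $k^2$ edge-disjoint directed paths $P_{ij}$ (from $a_i$ to $b_j$, for $(i,j)\in[k]^2$) that together form an immersion of $\dK_{k,k}$. Note first that the assumption $e(\text{underlying simple graph})\ge 100kn$ combined with the max in/out-degree bound $100k$ (which forces $e(D)\le 100kn$) gives equality throughout, so $D$ is in fact a simple $100k$-regular oriented graph. Hence we may fix any $2k$ distinct vertices $a_1,\dots,a_k,b_1,\dots,b_k$. We then process the pairs $(i,j)$ in an arbitrary order and, at each step, write $F$ for the edges used by previously built paths. To construct $P_{ij}$ we proceed in three stages: (i) grow an out-arborescence $T^+$ rooted at $a_i$ with $|V(T^+)|\ge M$ using only edges in $D\setminus F$; (ii) analogously grow an in-arborescence $T^-$ rooted at $b_j$ with $|V(T^-)|\ge M$; (iii) apply \Cref{lem:short-paths-diam} to find a directed path $Q$, of length at most $L:=1600(\log(n/k))^3$, from $V(T^+)$ to $V(T^-)$ avoiding $F\cup E(T^+)\cup E(T^-)$. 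The path $P_{ij}$ is then the concatenation of the unique path in $T^+$ from $a_i$ to the start of $Q$, then $Q$, then the unique path in $T^-$ from the end of $Q$ to $b_j$.

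The parameter $M$ is selected so that, whenever \Cref{lem:short-paths-diam} is applied with $|X|,|Y|\ge M$, the forbidden set, of size at most $|F|+2M$, fits inside the allowed budget $d(D)\rho(M)M$. Since the arborescences have depth $O((\log n)^3)$ by the expander growth rate, each path has length $O((\log n)^3)$, and hence $|F|\le k^2\cdot O((\log n)^3)$. Plugging in $d(D)=100k$ and $\rho(M)M\approx M/(\log(M/k))^2$, the choice $M=\Theta\bigl(k(\log n)^3(\log\log n)^2\bigr)$ suffices. The total edge use across all $k^2$ iterations is then $O(k^3(\log n)^{O(1)})$, which is much less than $e(D)=100kn$ thanks to the hypothesis $n>4k(100k)^{(\log\log n)^6}$.

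The chief obstacle is stage (i): growing the out-arborescence from a single root $a_i$ up to size $M$ in the presence of a potentially large forbidden set $F$. The robust directed $k$-vertex-expander property delivers $|N^+_{D\setminus F'}(S)|\ge\rho(|S|)|S|$ only when $|F'|\le d(D)\rho(|S|)|S|$; since $d(D)\rho(k)k=\Theta(k^2)$, this fails once $|F|\gg k^2$. To bootstrap past the small-$|S|$ regime, we reserve in advance $k$ distinct out-edges at each $a_i$ (one per pair $(i,\cdot)$), which guarantees that the out-neighborhood of $a_i$ in $D\setminus F$ always contains at least $99k$ vertices, so the arborescence can be pushed from $|S|=1$ to $|S|=\Theta(k)$ in a single bulk step that bypasses the expander inequality. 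We then grow from $\Theta(k)$ up to $M$ by alternating density-based bulk steps (which work while $|F|$ restricted to the out-boundary of the current set remains small, a property that follows from a counting argument exploiting the fact that $|F|\cdot|B(a_i,r)|/n$ is tiny by the hypothesis on $n$) with applications of the expander inequality (which dominate once $|S|$ surpasses $\Theta(kL(\log L)^2)$). The exponent $(\log\log n)^6$ in the hypothesis on $n$ is precisely what is needed for this counting to go through over the depth of the bootstrap.
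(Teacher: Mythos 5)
Your reduction to a simple $100k$-regular oriented graph is correct, and your high-level scheme---greedily grow out- and in-balls from the $2k$ branch vertices and stitch them together with \Cref{lem:short-paths-diam}---is the approach the paper takes. But the step you yourself flag as the ``chief obstacle'' is where the proposal genuinely fails: the bootstrap for growing a ball from size $\Theta(k)$ up to $M$ rests on a heuristic that is false, and the role of the hypothesis $n>4k(100k)^{(\log\log n)^6}$ is misidentified.

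Two ideas are missing. First, you take ``any'' $2k$ branch vertices, whereas the paper selects $X\cup Y$ to be pairwise at distance greater than $2r$ in the underlying graph, with $r=(\log\log n)^6$; this is exactly what the lower bound on $n$ in terms of $(100k)^{(\log\log n)^6}$ is used for, and it ensures the balls $B(x,r)$ for $x\in X\cup Y$ are pairwise disjoint. Second, your claim that ``$|F|$ restricted to the out-boundary of the current set remains small $\ldots$ because $|F|\cdot|B(a_i,r)|/n$ is tiny'' is an averaging heuristic that only applies if $F$ were spread uniformly and independently of $a_i$. Neither holds: the up-to-$k$ previously built paths starting at $a_i$ lie partly inside $B(a_i,r)$ by construction, and paths from other roots can funnel through a few vertices of the ball (a single vertex can carry up to $\Theta(k)$ of the $k^2$ paths), creating vertices whose forbidden degree inside the ball is $\Theta(k)$ and killing the local expansion step, since the budget $d(D)\rho(|S|)|S|$ at $|S|=\Theta(k)$ is only $\Theta(k^2)$. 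Reserving $k$ out-edges at $a_i$ controls only $|S|=1$ and does nothing about concentration at larger depth. What actually makes the growth go through in the paper is a degree-capping device: before building $P_i$, every edge touching a vertex $u\notin B(x_i)\cup B(y_i)$ whose used-edge degree already exceeds $k/(\log\log n)^3$ is added to the forbidden set (the set $F_{i,1}$), so later paths never revisit such a hot spot; combined with the disjointness of the balls, this yields a uniform $O(k/(\log\log n)^3)$ bound on the forbidden degree of every vertex inside $B(x_i)$, which is precisely what \Cref{claim:expand-in-ball} needs in order to apply robust vertex-expansion at every intermediate scale of $|S|$. Without a substitute for the far-apart selection and this cap-and-avoid mechanism, the arborescence growth in your argument is unjustified.
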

        
        \begin{proof}
            Let $G$ be the simple graph obtained from $D$ by ignoring directions and multiplicities, and let $D'$ be a subgraph of $D$ which is an orientation of $G$; then $e(D') = e(G) \ge 100kn$. We claim that $D'$ has at least $n/2$ vertices with out-degree at least $3k$. Indeed, otherwise $e(D') \le 3k \cdot \frac{n}{2} + 100k \cdot \frac{n}{2} < 100kn$, a contradiction. Similarly, $D'$ has at least $n/2$ vertices with in-degree at least $n/2$. Let $\Vp$ be the set of vertices with out-degree at least $3k$ in $D'$ and let $\Vm$ be the set of vertices in $D'$ with in-degree at least $3k$. 
            
            Set $r = (\log \log n)^6$. We claim that there are disjoint sets of vertices $X \subseteq \Vp$ and $Y \subseteq \Vm$ of size $k$ each such that any two distinct vertices in $X \cup Y$ are at distance at least $2r+1$ from each other in $G$. To see this we define vertices $x_1, \ldots, x_k \in \Vp$ and $y_1, \ldots, y_k \in \Vm$, as follows. Let $x_1$ be any vertex in $\Vp$. Having defined $x_1, \ldots, x_{i-1}$, set $\Vp_i = \Vp \setminus (B_G(x_1, 2r) \cup \ldots \cup B_G(x_{i-1}, 2r))$. Since $|B_G(x_j, 2r)| \le (100k)^{2r}$ for $j \in [i-1]$, we have $|\Vp_i| > n - k(100k)^{2r} > \frac{n}{2}$. Let $x_i$ be any vertex in $V(G_i) \cap \Vp$. Define $y_1, \ldots, y_k$ similarly (we will need the inequality $n - 2k(100k)^{2r} > \frac{n}{2}$).
            
            Take $X = \{x_1, \ldots, x_k\}$ and $Y = \{y_1, \ldots, y_r\}$; then $X$ and $Y$ have the required property.
            For $x \in X \cup Y$, denote $B(x) = B_G(x, r)$; so the sets $B(x)$ with $x \in X \cup Y$ are pairwise disjoint.
            
            \begin{claim} \label{claim:expand-in-ball}
                Let $x \in X$ and let $F$ be a subgraph of $D$ with maximum in- and out-degree at most $\frac{k}{(\log \log n)^3}$. Let $F'$ be another subgraph of $D$, with the following property: write $D' = D \setminus F$ and let $X_i$ be the set of vertices $u$ for which there is a directed path in $D'$ of length at most $i$ from $x$ to $u$; then $F'$ contains at most $ki$ edges with both ends in $X_i$.
                Then $|X_r| \ge k(\log n)^6$. 
            \end{claim}
            
            \begin{proof}
                First note that $|X_1| \ge 3k - \frac{k}{(\log \log n)^3} - k \ge k$ (using $x \in \Vp$).
                
                We will prove the following.
                \begin{equation} \label{eqn:statement}
                    \text{If $i \in [2,r]$ and $|X_i| \le k(\log n)^6$, then $|X_{i+1}| \ge |X_i|(1 + \rho(|X_i|))$.}
                \end{equation}
                Assuming \eqref{eqn:statement} and $|X_r| \le k(\log n)^6$, then $|X_{i+1}| \ge |X_i|(1 + \rho(|X_i|))$ for $i \in [r]$. Using that $|X_1| \ge k$ and $\rho(|X_i|) \ge \rho(|X_r|) \ge \frac{1}{256(\log(4(\log n)^6))^2} \ge \frac{1}{(\log \log n)^3}$, we find that 
                \begin{equation*}
                    |X_r| 
                    \ge k\left(1 + \frac{1}{(\log \log n)^3}\right)^r 
                    \ge k\cdot \exp\left(\frac{r}{2(\log \log n)^3}\right)
                    > k \cdot \exp\left((\log \log n)^2\right)
                    > k(\log n)^6,
                \end{equation*}
                (recall that $r = (\log \log n)^6$) a contradiction. 
               
                We now turn to the proof of \eqref{eqn:statement}, which we prove by induction. Suppose that $i \in [2,r]$ and that $|X_{j+1}| \ge |X_j|(1 + \rho(|X_j|))$ for $j \in [2, i-1]$.
                
                Let $F_i$ be the subgraph of $F$ consisting of edges that are incident with $X_i$; similarly, let $F_i'$ be the subgraph of $F'$ consisting of edges incident with $X_i$. Note that 
                \begin{equation*}
                    e(F_i) 
                    \le \frac{k}{(\log \log n)^3} \cdot |X_i|
                    \le \frac{k \cdot \rho(k (\log n)^6)}{2} \cdot |X_i| 
                    \le \frac{k \cdot \rho(|X_i|)|X_i|}{2}.
                \end{equation*}
                We now prove that $e(F_i') \le \frac{1}{2} k \rho(|X_i|)|X_i|$. By choice of $F_i'$, we have $e(F_i') \le k(i+1) \le 2ki$. It thus suffices to show $4i \le \rho(|X_i|)|X_i|$. Write $s = |X_i|/k$. Then, using that $|X_1| \ge k$ and that $|X_{j+1}| \ge (1 + \rho(|X_j|))$ for $j \in [2, i-1]$,
                \begin{equation*}
                    s = \frac{|X_i|}{k} \ge \frac{|X_i|}{|X_1|}
                    \ge \left(1 + \rho(|X_i|)\right)^{i-1} 
                    \ge \left(1 + \frac{1}{256(\log (4s))^2}\right)^{i/2}
                    \ge \exp\left(\frac{i}{1024(\log (4s))^2}\right)
                \end{equation*}
                It follows that $4i \le 4096(\log (4s))^3$, implying that
                \begin{equation*}
                    \rho(|X_i|)|X_i|
                    = \frac{sk}{256(\log(4s))^2}
                    \ge 4096(\log(4s))^3
                    \ge 4i,
                \end{equation*}
                using that $k$ is large.
                Hence, indeed, $4i \le \rho(|X_i|)|X_i|$, as claimed. We thus have $e(F_i \cup F_i') \le k\cdot \rho(|X_i|)|X_i|$, so, by expansion, $|X_{i+1}| \ge |X_i|(1 + \rho(|X_i|))$, proving \eqref{eqn:statement}.  
            \end{proof}
            
            Write $a = k^2$ and let $(x_1, y_1), \ldots, (x_a, y_a)$ be an ordering of the ordered pairs $(x, y)$ with $x \in X$ and $y \in Y$. We pick paths $P_1, \ldots, P_a$ as follows. 
            
            Suppose that $P_1, \ldots, P_{i-1}$ are defined.
            We define subgraphs $F_{i, x_i}, F_{i, y_i}, F_{i, 0}, F_{i,1}, F_i \subseteq D$ as follows.
            The edges of $F_{i, x_i}$ are those that appeared in a path $P_j$ with $j < i$ that starts at $x_i$. Similarly, the edges of $F_{i, y_i}$ are those that appeared in a path $P_j$ with $j < i$ that ends at $y_i$. The edges of $F_{i, 0}$ are those appearing in a path $P_j$ with $j < i$. Form $F_{i,1}$ by including all edges in $D$ that are incident to a vertex $u \notin B(x_i) \cup B(y_i)$ which has in- or out-degree at least $\frac{k}{(\log \log n)^3}$ in $F_{i,0}$. Finally, set $F_i = F_{i, x_i} \cup F_{i, y_i} \cup F_{i, 0} \cup F_{i, 1}$. We take $P_i$ to be a shortest directed path from $x_i$ to $y_i$ in $D \setminus F_i$; we will show that such a path exists and has length at most $(\log n)^4$. 
            
            Suppose that $P_1, \ldots, P_{i-1}$ were chosen according the above procedure, and have length at most $(\log n)^4$.
            Write $x = x_i$, $y = y_i$, $F' = F_{i, x_i}$ and $F = F_i \setminus F'$. For $s \in [r]$ let $X_s$ be the set of vertices $u$ for which there is a path of length at most $s$ from $x$ to $u$ in $D \setminus F_i$. We claim that
            \begin{itemize}
                \item
                    the number of edges in $F'[X_s]$ is at most $ks$, for $s \in [r]$,
                \item
                    the maximum degree of $F[X_r]$ is at most $\frac{k}{(\log \log n)^3}$.
            \end{itemize} 
            To prove the first item, fix $s \in [r]$ and consider a pair $(x_j, y_j)$, with $j < i$, such that $x_j = x$. Note that $F_j[X_s] \subseteq F_i[X_s]$. Indeed, since $X_s \subseteq B(x)$ we have $F_j[X_s] = (F_{j, x} \cup F_{j, 0})[X_s] \subseteq (F_{i, x} \cup F_{i, 0})[X_s] = F_i[X_s]$. 
            Let $u$ be the last vertex of $P_j$ in $X_s$ and let $P'$ be the subpath of $P_j$ that starts at $x$ and ends at $u$. Then $P'$ is a shortest path in $(D \setminus F_j)[X_s]$ from $x$ to $u$ (otherwise $P_j$ could be replaced by a shorter path, contrary to its choice). As $F_j[X_s] \subseteq F_i[X_s]$ and thus $(D \setminus F_i)[X_s] \subseteq (D \setminus F_j)[X_s]$, it follows that $P'$ is a shortest path in $((D \setminus F_i) \cup P')[X_s]$, implying that $P_j$ contains at most $s$ edges with both ends in $X_s$. Since there are at most $k$ values of $j$ with $j < i$ and $x_j = x$, the first item above holds.
            
            We now prove the second item. Fix $u \in X_r$. Consider the largest $j$, with $j < i$, such that $u$ is in $P_j$ and $x_j \neq x$. By definition of $P_j$ and $F_{j,1}$ and the fact that $u \notin B(x_j) \cup B(y_j)$ this means that the in- and out-degrees of $u$ in $F_{j,0}$ are smaller than $\frac{k}{(\log \log n)^3}$. It follows that $u$ has in- and out-degree at most $\frac{k}{(\log \log n)^3}$ in $F[X_r]$, as required.
            
            Having proved the two items above, \Cref{claim:expand-in-ball} implies that $|X_r| \ge k(\log n)^6$. A symmetric argument implies that the set $Y_r$, of vertices $u$ for which there is a directed path in $D \setminus F_i$ from $u$ to $y$ of length at most $r$, has size at least $k(\log n)^6$.
             
            To complete the proof we need an upper bound on $e(F_i)$. Recall that the paths $P_1, \ldots, P_{i-1}$ have length at most $(\log n)^4$ each. Thus $e(F_{i, 0}) \le k^2(\log n)^4$. By choice of $F_{i,1}$ and the assumption that maximum in- and out-degree of $D$ is at most $100k$, it follows that
            \begin{equation*}
                e(F_i) 
                \,\le\, \frac{e(F_{i,0}) \cdot 200k}{k/(\log \log n)^3}
                \le k^2(\log n)^4 \cdot 200(\log \log n)^3
                \le k^2 (\log n)^5,
            \end{equation*}
            using that $n$ is large. Let $X_r'$ and $Y_r'$ be subsets of $X_r$ and $Y_r$, respectively, of size exactly $k(\log n)^6$.
            Then
            \begin{equation*}
                d(D) \cdot \rho(|X_r'|)|X_r'| 
                \ge k \cdot \frac{1}{256(\log(4(\log n)^6))^2} \cdot k(\log n)^6
                \ge k^2(\log n)^5 \ge e(F_i).
            \end{equation*}
            By \Cref{lem:short-paths-diam}, there is a path in $D \setminus F_i$ from $X_r'$ to $Y_r'$ of length at most $1600(\log n)^3$. It follows that there is a path from $x$ to $y$ in $D \setminus F_i$ whose length is at most $1600(\log n)^3 + 2r \le (\log n)^4$. This means that $P_i$ can be chosen appropriately and has length at most $(\log n)^4$, as claimed, for $i \in [a]$. 
            
            The paths $P_1, \ldots, P_a$ are pairwise edge-disjoint and the join each of the pairs $(x, y)$ with $x \in X$ and $y \in Y$. In particular, the union $P_1 \cup \ldots \cup P_a$ is an immersion of $\dK_{k,k}$.
        \end{proof}
        
    \subsection{Immersions in small expanders} \label{subsec:immersion-small-expanders}
    
        \begin{lemma} \label{lem:immersion-small-n}
            Let $n, k \ge 1$ be integers such that $k$ and $n/k$ are sufficiently large and $k \ge (\log (n/k))^7$. Let $D$ be a multi-digraph with the following properties:
            it is a robust directed $k$-vertex-expander on $n$ vertices; it has maximum in- and out-degree at most $100k$; and the graph obtained from $D$ by ignoring directions and multiplicities has at least $100kn$ edges. Then $D$ immerses a subgraph of $\dK_{2k,2k}$ with at least $k^2/2$ edges.
        \end{lemma}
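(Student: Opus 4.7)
The plan is to follow the framework of \Cref{lem:immersion-large-n}, adjusting the parameters and the definition of the ``forbidden'' set to the small-$n$ regime $k \ge (\log(n/k))^7$. As in that proof, since the underlying simple graph $G$ of $D$ has at least $100kn$ edges and $D$ has maximum in- and out-degree at most $100k$, we fix an orientation $D'$ of $G$ and let $V^+$ (respectively $V^-$) be the set of vertices with out- (respectively in-) degree at least $3k$ in $D'$; each has size at least $n/2$. Pick disjoint $X \subseteq V^+$ and $Y \subseteq V^-$ of size $2k$ each. Our goal is to find at least $k^2/2$ pairwise edge-disjoint directed paths joining distinct pairs in $X \times Y$; these constitute an immersion of a subgraph of $\dK_{2k,2k}$ with at least $k^2/2$ edges.

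Set $r = c_1(\log(n/k))^3$ for the ball radius, $T = k/(\log(n/k))^3$ for the ``high-load'' threshold, and $L = c_2(\log(n/k))^3$ for the path-length bound, where $c_1, c_2$ are suitable absolute constants; the assumption $k \ge (\log(n/k))^7$ provides ample slack in the polylogarithmic estimates below. Greedily process the $4k^2$ pairs $(x_i, y_i) \in X \times Y$. At step $i$, form $F_i = F_{i,0} \cup F_{i,1}$, where $F_{i,0}$ is the union of the previously selected paths and $F_{i,1}$ consists of all edges incident to some vertex $u \notin \{x_i,y_i\}$ whose in- or out-degree in $F_{i,0}$ is at least $T$. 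Attempt to find a directed path $P_i$ from $x_i$ to $y_i$ in $D \setminus F_i$ of length at most $L$, via the same two-step scheme as in \Cref{lem:immersion-large-n}: grow an out-ball from $x_i$ (and symmetrically an in-ball toward $y_i$) to size at least $k$ in at most $r$ steps using an analogue of \Cref{claim:expand-in-ball}, and then apply \Cref{lem:short-paths-diam} to connect the two balls.

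The main obstacle, which distinguishes this proof from that of \Cref{lem:immersion-large-n}, is that one cannot pick $X, Y$ so that the balls of radius $2r$ around their vertices are pairwise disjoint in $G$---there is simply not enough room when $n/k$ is this small. Consequently the clean argument from that proof, which bounds the degree of a ball-vertex in the non-$F_{i,x_i}$ portion of $F_i$, breaks down. To work around this, we drop the ball condition in the definition of $F_{i,1}$ (excluding high-load vertices globally rather than only outside $B(x_i) \cup B(y_i)$) and rely on the global bound $e(F_i) = O(k^2(\log(n/k))^6)$, which is absorbed by the robust-expansion threshold $d(D) \rho(|X_s|) |X_s|$ once $|X_s|$ is taken to be $\Omega(k)$---the hypothesis $k \ge (\log(n/k))^7$ is exactly what makes this go through. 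A direct calculation analogous to \Cref{claim:expand-in-ball} then verifies the ball growth, \Cref{lem:short-paths-diam} supplies a connecting path of length at most $1600(\log(n/k))^3$, and the edge-count bound on $e(F_i)$ remains valid so long as at most $k^2/2$ paths have been selected, yielding the required $\ge k^2/2$ successful pairs.
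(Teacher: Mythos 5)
Your high-level plan---process pairs in $X \times Y$ greedily, filter out ``high-load'' vertices, grow balls and apply \Cref{lem:short-paths-diam}---is the strategy of \Cref{lem:immersion-large-n}, but the specific workaround you propose for the small-$n$ regime does not close the gap that arises there, and the actual proof in the paper is structured quite differently.

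The quantitative claim at the heart of your argument is that the global bound $e(F_i) = O(k^2(\log(n/k))^6)$ ``is absorbed by the robust-expansion threshold $d(D)\rho(|X_s|)|X_s|$ once $|X_s|$ is taken to be $\Omega(k)$,'' with the hypothesis $k \ge (\log(n/k))^7$ providing the slack. This is not correct: writing $\ell = n/k$, at $|X_s| = ck$ the threshold is $d(D)\rho(ck)\cdot ck = \Theta(k) \cdot \Theta(1) \cdot \Theta(k) = \Theta(k^2)$, which is smaller than $e(F_i)$ by a factor of order $(\log\ell)^6$. The hypothesis $k \ge (\log\ell)^7$ compares $k$ with $\log\ell$ and does not affect this ratio. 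One only beats $e(F_i)$ once $|X_s| \gtrsim k(\log\ell)^7$, and starting from a single source $x_i$ with $d^+(x_i) \le 100k$, the ball begins at size at most $100k$ and cannot be pushed past this bottleneck by the expansion property alone, since the expansion condition is exactly what fails below the threshold size. Falling back to the high-load filter $F_{i,1}$ does not rescue this either: with $T = k/(\log\ell)^3$ and paths of length $\Theta((\log\ell)^3)$, the number of high-load vertices after $\Theta(k^2)$ paths can be $\Theta(k(\log\ell)^8) \gg k$, so they can swallow the entire expansion at small ball sizes (for $|X_s| \le 100k$ the gain $\rho(|X_s|)|X_s|$ is $\Theta(k)$, far smaller than the number of high-load vertices that may occupy $N^+(X_s)$), and nothing prevents \emph{all} of $x_i$'s out-neighbours becoming high-load, leaving $|X_1|$ empty. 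In \Cref{lem:immersion-large-n} these dangers are neutralised by the pairwise-disjoint radius-$2r$ balls around the terminals and by restricting $F_{i,1}$ to the outside of $B(x_i) \cup B(y_i)$; you correctly identify that the disjoint-balls device is unavailable when $n$ is small, but the remedy you propose does not supply a replacement.

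The paper's proof of \Cref{lem:immersion-small-n} avoids the high-load filter entirely and uses a genuinely different mechanism: it pre-selects $2ab$ terminals (with $a = \min\{k, \ell/(\log\ell)^8\}$ and $b = \lceil k/a\rceil$) together with pairwise disjoint \emph{reservoirs} $W(u)$ of $20k$ out-/in-neighbours for each terminal $u$, and deletes (via $F_0$) all other edges at each terminal. This forces any path through a source $u$ to start at $u$, so each reservoir loses at most $ab \le 2k$ vertices over the whole process. The pairs are organised into $ab$ matchings, and for each matching a maximal family of short edge-disjoint paths is taken; if fewer than $k/2$ pairs in some matching are joined, the union of the surviving reservoirs of the $\ge a/4$ failing sources forms an initial set of size $\ge k(\log\ell)^7$. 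At that size the threshold $d(D)\rho(|X_1|)|X_1|$ does dominate $e(F_{i+1}) = O(k^2(\log\ell)^6)$, the set expands past $n/2$, and a pigeonhole over the $a$ sources yields the contradiction. The reservoirs---and the fact that the expansion is launched from a \emph{union} of many of them rather than from a single vertex---are precisely what your proposal lacks.
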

        
        \begin{proof}
            \def \Up {U^+}
            \def \Um {U^-}
            \def \Us {U^{\sigma}}
            \def \Bp {B^+}
            \def \Bm {B^-}
            \def \Bs {B^{\sigma}}
            \def \up {u^+}
            \def \um {u^-}
            \def \us {u^{\sigma}}
            \def \Ns {N^{\sigma}}
            \def \Wp {W^+}
            \def \Wm {W^-}
            \def \Ws {W^{\sigma}}
            \def \pm {\{+, -\}}
            Write $\ell = n/k$; so $\ell$ is large and $k \ge (\log \ell)^7$. 
            Write $a = \min\{k, \frac{\ell}{(\log \ell)^8}\}$ and $b = \ceil{k/a}$; so $k \le ab \le 2k$.
            
            \begin{claim}
                There are sets of vertices $\Up_1, \ldots, \Up_b, \Um_1, \ldots, \Um_b$, and $W(u)$ for $u \in \bigcup_i(\Up_i \cup \Um_i)$, with the following properties.
                \begin{itemize}
                    \item 
                        The sets $\Up_1, \ldots, \Up_b, \Um_1, \ldots, \Um_b$ are pairwise disjoint sets of size $a$ each. Set $\Us := \Us_1 \cup \ldots \cup \Us_b$ for $\sigma \in \{+, -\}$ and $U := \Up \cup \Um$.
                    \item
                        The sets $W(u)$, with $u \in \Us_i$, are pairwise disjoint, for $i \in [b]$ and $\sigma \in \{+, -\}$. 
                    \item
                        $W(u)$ is a subset of $\Ns(u)$ of size $20k$, for $u \in \Us$.
                \end{itemize}
            \end{claim}
            
            \begin{proof}
                For some $i \in [b]$, suppose that $\Up_1, \ldots, \Up_{i-1}, \Um_1, \ldots, \Um_{i-1}$ and $W(u)$, for $u \in U_{< i}$, satisfy the above properties, where $U_{< i} = \bigcup_{j < i}(\Up_i \cup \Um_i)$. We will show how to obtain sets $\Up_i, \Um_i$ and $W(u)$ for $u \in \Up_i \cup \Um_i$, that together with the previously defined sets satisfy the above properties.
                
                Let $D' = D \setminus U_{< i}$. We will pick distinct vertices $u_1, \ldots, u_a$ in $D'$ and sets of vertices $W_1, \ldots, W_a$ that are pairwise disjoint sets of size $20k$, such that $W_j$ is a set of out-neighbours of $u_j$ in $D'$, for $j \in [a]$.
                Suppose that $u_1, \ldots, u_{j-1}$ and $W_1, \ldots, W_{j-1}$ are defined and satisfy the requirements, for some $j \in [a]$. We will show that a vertex $u_j$ and set $W_j$ with the required properties can be found. To see this, set $W_{< j} = \bigcup_{s < j} W_s$ and $D'' = D' \setminus (\{u_1, \ldots, u_{j-1}\} \cup W_{< j})$. 
                Note that $|U_{< i} \cup \{u_1, \ldots, u_{j-1}\}| \le 2ab \le 4k \le n/8$ and $|W_{< j}| \le a \cdot 20k \le \frac{20n}{(\log \ell)^8} \le n/8$. It follows that $D''$ is obtained from $D$ by the removal of at most $n/4$ vertices.
                
                Denote by $G$ the simple graph obtained from $D$ by ignoring directions and multiplicities, and let $G''$ be obtained similarly from $D''$. By assumption, $G$ has at least $100kn$ edges and maximum degree at most $200k$. It follows that $e(G'') \ge e(G) - \frac{n}{4} \cdot 200k \ge 50 k n$. Let $H''$ be an orientation of $G''$ which is a subdigraph of $D''$. Then $H''$ has average out-degree at least $50k$, implying that there is a vertex $u_j$ in $D''$ whose out-degree in $H''$ is at least $50k$. Let $W_j$ be a subset of the out-neighbourhood of $u_j$ in $H''$ of size $20k$. This shows that vertices $u_1, \ldots, u_a$ and sets $W_1, \ldots, W_a$ as above exist.
                A similar argument shows that there exist vertices $u_1', \ldots, u_a'$ and sets $W_1', \ldots, W_a'$, all in $D' \setminus (\{u_1, \ldots, u_a\} \cup W_{\le a})$, such that the sets $W_1, \ldots, W_a, W_1', \ldots, W_a'$ are pairwise disjoint sets of size $20k$ and $W_j$ is a set of in-neighbours of $u_j$. Take $\Up_i = \{u_1, \ldots, u_a\}$, $\Um_i = \{u_1', \ldots, u_a'\}$, $W(u_i) = W_i$ and $W(u_i') = W_i'$. 
            \end{proof}
            
            Let $\Up_1, \ldots, \Up_b, \Um_1, \ldots, \Um_b$ and $W(u)$, for $u \in U$, where $U = \bigcup_{i \in [b]}(\Up_i \cup \Um_i)$, be as in the claim above. 
            Note that $|U| = 2ab \le 4k$. 
            For $u \in U$, let $W'(u)$ be a subset of $W(u) \setminus U$ of size $10k$.
            
            For $u \in \Up$, let $F(u)$ be the set of edges in $D$ that touch $u$ but are not of the form $uv$ with $v \in W(u)$, and for $u \in \Um$, let $F(u)$ be the set of edges in $D$ that touch $u$ but are not of the form $vu$ with $v \in W(u)$. Let $F_0$ be the union of the sets $F(u)$ with $u \in U$. Then $|F_0| \le 200k \cdot 2ab \le 800k^2$.
            
            Let $M_1, \ldots, M_{ab}$ be a collection of perfect matchings in $\Up \times \Um$ that partition $\Up \times \Um$ and such that $M_i[\Up_j, \Um_{j'}]$ is either empty or a perfect matching for every $j, j' \in [b]$. We will find collections of paths $\PP_1, \ldots, \PP_{ab}$ as follows. Let $F_i$ be the union of $F_0$ with the edges that appear in one of the paths in $\PP_j$ with $j < i$. Take $\PP_i$ to be a maximal collection of pairwise edge-disjoint paths of length at most $(\log \ell)^5$ joining pairs of vertices in $M_i$, each joining a different pair. We claim that $|\PP_i| \ge k/2$ for $i \in [ab]$. Observe that, if true, this would complete the proof of the lemma.
            
            To see that $|\PP_i| \ge k/2$, fix $i \in [a]$ and suppose that $|\PP_i| \le k/2$. Let $M_i'$ be the submatching of $M_i$ consisting of pairs that are not joined by a path in $\PP_i$.
            Then $|M_i'| \ge k/2$, hence there exist $j, j' \in [b]$ such that $M_i[\Up_j, \Um_{j'}]$ is a perfect matching and $|M_i'[\Up_j, \Um_{j'}]| \ge a/2$. Denote $M' = M_i'[\Up_j, \Um_{j'}]$, let $X = V(M') \cap \Up_j$ and $Y = V(M') \cap \Um_{j'}$, and set $D' = D \setminus F_{i+1}$. For a vertex $u$ in $D'$ and integer $i$, let $\Bp_i(u)$ be the set of vertices $v$ for which there is a directed path of length at most $i$ from $u$ to $v$ in $D'$, and let $\Bm_i(u)$ be the set of vertices $v$ for which there is a directed path of length at most $i$ from $v$ to $u$ in $D'$.
            
            Set $r = (\log \ell)^4$.
            We claim that for every $(x, y) \in M'$ either $|\Bp_r(x)| \le k (\log \ell)^7$ or $|\Bm_r(y)| \le k (\log \ell)^7$. Indeed, suppose that $|\Bp_r(x)|, |\Bm_r(y)| \ge k(\log \ell)^7$ and let $X$ and $Y$ be subsets of $\Bp_r(x)$ and $\Bp_r(y)$, respectively, of size $k(\log \ell)^7$. Observe that $|F_{i+1}| \le d(D) \rho(|X|) |X|$. Indeed, this follows from the next two inequalities (using that the paths in $\PP_j$ have length at most $(\log n)^5$).
            \begin{align} \label{eqn:F}
                \begin{split}
                    & |F_{i+1}| \le |F_0| + (ab)^2 \cdot (\log \ell)^5 \le 800k^2 + 4k^2 (\log \ell)^5 \le k^2(\log \ell)^6, \\
                    & d(D) \rho(|X|) |X| 
                    \ge k \cdot \frac{1}{256(\log(4 (\log \ell)^7))^2} \cdot k (\log \ell)^7 \ge k^2 (\log \ell)^6.
                \end{split}
            \end{align}
            So, by \Cref{lem:short-paths-diam}, there is a directed path of length at most $1600(\log \ell)^3$ from $X$ to $Y$ in $D'$, showing that there is a path from $x$ to $y$ in $D'$ whose length is at most $1600(\log \ell)^3 + 2r \le (\log \ell)^5$, a contradiction to the maximality of $\PP_i$. 
            
            Hence, either $|\Bp_r(x)| \le k (\log \ell)^7$ for at least $a/4$ values of $x$ in $X$ or $|\Bm_r(y)| \le k (\log \ell)^7$ for at least $a/4$ values of $y$ in $Y$. Without loss of generality, we assume the former.
            Let $X_0$ be the set of vertices $x$ in $X$ for which $|\Bp_r(x)| \le k (\log \ell)^7$; so $|X_0| \ge a/4$. Define $X_i = \bigcup_{x \in X_0} \Bp_i(x)$. Since each $x \in X_0$ has no in-neighbours in $D \setminus F_0$, at most $ab$ of the paths in $\PP_1 \cup \ldots \cup \PP_i$ contain an edge touching $x$, implying that all but at most $ab \le 2k$ vertices of $W(x)$ are in $X_1$, for $x \in X_0$. Since the sets $W(x)$ are pairwise disjoint, we have $|X_1| \ge (a/4) \cdot (|W(x)| - 2k) \ge 2ak \ge \min\{k^2, \frac{2n}{(\log \ell)^8}\} \ge k (\log \ell)^{7}$ (using that $k \ge (\log \ell)^7$ and that $\ell$ is large). As in \eqref{eqn:F}, we have $|F_{i+1}| \le d(D) \rho(|X_1|)|X_1| \le d(D) \rho(|X_j|)|X_j|$ for $j \ge 1$. Thus, by expansion, if $|X_j| \le n/2$ then $|X_{j+1}| \ge (1 + \rho(|X_j|))|X_j| \ge (1 + \rho(n))|X_j|$. Suppose that $|X_r| \le n/2$. Then
            \begin{align*}
                |X_{r+1}| \ge (1 + \rho(n))^r |X_1|
                & = \left(1 + \frac{1}{256(\log (4n/k))^2}\right)^r \cdot |X_1| \\
                & \ge \left(1 + \frac{2}{(\log \ell)^3}\right)^r \cdot |X_1| 
                \ge \exp\left( \frac{r}{(\log \ell)^3} \right) k > k\ell = n,
            \end{align*}
            (recalling that $r = (\log \ell)^4)$), a contradiction. Hence $|X_r| \ge n/2 \ge a \cdot k(\log \ell)^7$ (using $ak \le \frac{n}{(\log \ell)^8}$ and that $\ell$ is large). By definition of $X_r$, it follows that $|\Bp_r(x)| \ge k (\log \ell)^7$ for some $x \in X_0$, a contradiction.
        \end{proof}

    \subsection{Immersions in Eulerian digraphs with high degree} \label{subsec:find-dense-immersion}
    
        \begin{theorem} \label{thm:find-dense-immersion}
            There exists a constant $\beta > 0$ such that for every large enough integer $k$ the following holds: every (simple) Eulerian digraph with minimum in- (and out-) degree at least $100 k$ immerses a simple digraph with at most $\beta k$ vertices and at least $k^2/2$ edges.
        \end{theorem}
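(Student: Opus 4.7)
The plan is to chain together the tools developed so far. First, apply Lemma~\ref{lem:get-regular-Eulerian} with $d = 50k$ to $D$: either $D$ immerses $\dK_{50k,50k}$, which already has $(50k)^2 \geq k^2/2$ directed edges on $100k$ vertices (finishing with $\beta = 100$), or $D$ immerses a simple $100k$-regular Eulerian oriented graph $D_1$. In the latter case, apply Lemma~\ref{lem:find-directed-expander} to $D_1$ with expansion parameter $t = k$, obtaining an Eulerian multi-digraph $D_2$ on $n := |D_2|$ vertices that is immersed in $D$, is a robust directed $k$-vertex-expander, has maximum in- and out-degree at most $100k$, and whose simple undirected shadow has average degree of order $k$, hence $\Omega(kn)$ edges.

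Now branch on $n$. In \emph{Case~A}, $n \leq \beta_0 k$ for a suitably large constant $\beta_0$: then the simple digraph obtained from $D_2$ by keeping a single copy of each directed edge already has $\Omega(kn) = \Omega(k^2)$ edges on at most $\beta_0 k$ vertices, and we are done by taking $\beta \geq \beta_0$ (note that $n$ is automatically at least linear in $k$ since the shadow has average degree of order $k$). In \emph{Case~B}, $n > \beta_0 k$ and $k \geq (\log(n/k))^7$: invoke Lemma~\ref{lem:immersion-small-n} on $D_2$ with parameter $k$ to immerse a subgraph of $\dK_{2k,2k}$ with at least $k^2/2$ edges, which directly yields the conclusion with $\beta \geq 4$. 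In \emph{Case~C}, $n > 4k(100k)^{(\log \log n)^6}$: invoke Lemma~\ref{lem:immersion-large-n} on $D_2$ with parameter $k$ to immerse $\dK_{k,k}$, which has $k^2 \geq k^2/2$ edges on $2k$ vertices.

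The main obstacle is verifying that Cases~B and~C together exhaust all $n > \beta_0 k$. The range of Lemma~\ref{lem:immersion-small-n} extends up to $n$ of order $k \cdot 2^{k^{1/7}}$, while that of Lemma~\ref{lem:immersion-large-n} begins around $n$ of order $k^{(\log k)^6}$; for $k$ sufficiently large, $k^{1/7}$ dominates $(\log k)^7$, so the two ranges overlap and there is no gap between them. A secondary technical point is matching the constants in the edge-count hypothesis of Lemma~\ref{lem:immersion-large-n} (and the analogous hypothesis of Lemma~\ref{lem:immersion-small-n}): this is routine, requiring at most that the $100$ appearing in the theorem be replaced by a slightly larger absolute constant, which is absorbed into $\beta$ at the end.
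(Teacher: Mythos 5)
Your proposal follows the paper's proof essentially verbatim: apply Lemma~\ref{lem:get-regular-Eulerian} with $d = 50k$ to reduce to the $100k$-regular case, pass to an Eulerian robust directed expander $D''$ via Lemma~\ref{lem:find-directed-expander}, and then case-split on $n = |D''|$, invoking Lemma~\ref{lem:immersion-small-n} or Lemma~\ref{lem:immersion-large-n} in the two non-trivial regimes and observing that $D''$ is already dense enough when $n = O(k)$. You correctly flag both delicate points: that the constant in the edge-count hypothesis of the two immersion lemmas must be reconciled with the factor lost in Lemma~\ref{lem:find-directed-expander}, and that the ranges of the two lemmas must cover all $n > \beta_0 k$. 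On the latter, your quantitative estimate of where Lemma~\ref{lem:immersion-large-n} kicks in ($n$ of order $k^{(\log k)^6}$) is off — the self-referential threshold is closer to $n \approx k^{(\log\log k)^6}$ — but the conclusion that the two regimes overlap for large $k$ is correct, and the paper verifies it by showing directly that $k \le (\log(n/k))^7$ forces $n \ge 4k(100k)^{(\log\log n)^6}$ via $\log(4k(100k)^{(\log\log n)^6}) \le (\log\log n)^8 \le \log n$.
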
 
        
        \begin{proof}
            Let $\beta \ge 100$ be a sufficiently large constant so that \Cref{lem:immersion-small-n} holds when $n/k \ge \beta$ (and $k$ is large and $k \ge (\log(n/k))^7$). Let $k$ be a large enough integer, and let $D$ be an Eulerian digraph with minimum in-degree at least $100k$. By 
            \Cref{lem:get-regular-Eulerian}, $D$ immerses either $\dK_{50k, 50k}$ or a simple $100k$-regular Eulerian digraph (meaning that all in- and out-degrees are $100k$). If the former holds, we are done, so suppose that the latter holds, and let $D'$ be a simple Eulerian $100k$-regular digraph immersed by $D$. Now apply \Cref{lem:find-directed-expander} to find a multi-digraph $D''$ immersed by $D'$ with the following properties: $D''$ is a robust $k$-vertex-expander; and the simple graph obtained from $D''$ by ignoring directions and multiplicities has at least $100kn$ edges. Observe that by virtue of being an immersion of a $100k$-regular digraph, $D''$ has maximum in- and out-degree at most $100k$. We consider three cases: $n \le \beta k$; $n \ge \beta k$ and $k \ge (\log(n/k))^7$; and $n \ge 4k(100k)^{(\log \log n)^6}$. It is not hard to see that at least one of these three cases holds. Indeed, it suffices to show that if $k \le (\log (n/k))^7$ then $n \ge 4k(100k)^{(\log \log n)^6}$. The condition on $k$ implies $k \le (\log n)^7$, showing 
            \begin{equation*}
                \log\left(4k(100k)^{(\log \log n)^6}\right)
                \le 2 + \log k + (\log \log n)^6 \cdot \log (100k) 
                \le (\log \log n)^8
                \le \log n,
            \end{equation*}
            (using that $n$ is large, which follows from $k$ being large), as required.
            
            If $n \le \beta k$, then $D''$ is a graph on at most $\beta k$ vertices with at least $100kn \ge 5000 k^2$ edges, ignoring directions and multiplicities (using that $n \ge 50k$, which follows as the simple graph obtained by removing directions and multiplicities has average degree at least $50k$). If $n \ge \beta k$ and $k \ge (\log (n/k))^7$, by \Cref{lem:immersion-small-n}, $D''$ immerses a subgraph of $\dK_{2k,2k}$ with at least $k^2/2$ edges. Finally, if $n \ge 4k(100k)^{(\log \log n)^6}$, then by \Cref{lem:immersion-large-n}, $D''$ immerses $\dK_{k,k}$. Either way, $D''$ immerses a simple digraph on at most $\beta k$ vertices and with at least $k^2/2$ edges.
        \end{proof}
        
\section{Immersing a large complete digraph} \label{sec:immerse-clique}

    Our aim in this section is to prove the following theorem, which shows that a dense Eulerian multi-digraph immerses a large complete digraph.
    
    \begin{theorem} \label{thm:from-dense-to-complete}
        Let $D$ be an Eulerian multi-digraph on $n$ vertices whose underlying simple graph, obtained from $D$ by ignoring directions and multiplicities, has minimum degree at least $\alpha n$. Then $D$ immerses $\dK_s$, where $s = 10^{-9} \alpha^4 n$.
    \end{theorem}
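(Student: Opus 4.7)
My plan is to reduce to the undirected immersion result \Cref{thm:devos-et-al}, applied to the underlying simple graph $G$ of $D$, and then lift the resulting structure to $D$ using the Eulerian property. By hypothesis, $G$ has minimum degree at least $\alpha n$, so \Cref{thm:devos-et-al} yields an immersion of $K_T$ in $G$ with $T = \alpha n / 200$; let $S = \{v_1, \dots, v_T\}$ denote the branch vertices and $\{P_{ij} : 1 \le i < j \le T\}$ the pairwise edge-disjoint undirected paths realizing this immersion.

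The first substantive step is a cut bound on $D$. For any $A \subseteq V(D)$, set $S_A := A \cap S$; each of the $|S_A|(T - |S_A|)$ paths $P_{ij}$ with $v_i \in A$ and $v_j \notin A$ crosses the cut $(A, A^c)$ in $G$ and uses at least one edge, giving $e_G(A, A^c) \ge |S_A|(T - |S_A|)$. Because $D$ is Eulerian, $e_D(A, A^c) = e_D(A^c, A)$, and these two together cover $e_G(A, A^c)$, so
\[ e_D(A, A^c) \ge \tfrac{1}{2}\,|S_A|(T - |S_A|). \]

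Now set $s = 10^{-9}\alpha^4 n$ and pick any $S' \subseteq S$ of size $s$; the task reduces to producing, for each ordered pair $(u, v) \in S' \times S'$ with $u \ne v$, pairwise edge-disjoint directed paths $Q_{uv}$ from $u$ to $v$ in $D$---an integer multi-commodity routing of $s(s-1)$ unit demands. Since $s \ll T$, the cut bound ensures that for every $A$ the supply $\tfrac{1}{2}|S_A|(T - |S_A|)$ exceeds the demand $|S' \cap A| \cdot |S' \setminus A|$ by a factor of order $T/s = \Omega(\alpha^{-3})$, which is more than enough fractional slack.

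The main obstacle is converting this slack into an integer routing of all $s(s-1)$ paths simultaneously. I would handle this by iterative construction: route one directed path at a time via Menger's theorem applied to the residual multi-digraph, and verify inductively that the cut condition continues to hold with enough slack (which is why the $\Omega(\alpha^{-3})$ factor matters). The Eulerian property---the source of the factor-of-two saving $e_D(A, A^c) = e_D(A^c, A)$---is essential throughout this induction. As a cleaner alternative, one could first apply Mader's directed splitting-off theorem for Eulerian digraphs to split off all non-branch vertices of $D$ while preserving the local edge-connectivities between vertices of $S$, thereby reducing to an Eulerian multi-digraph on the small set $S$ in which the $\dK_s$ immersion may be built directly.
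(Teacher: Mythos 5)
Your cut bound $e_D(A, A^c) \ge \tfrac12 |S_A|(T-|S_A|)$ is correct, and it is a nice observation, but the step you yourself flag as ``the main obstacle'' is a genuine gap, and neither of your two proposed resolutions closes it. For the iterative Menger route: after you have committed $i$ of the $s(s-1)$ directed paths, the removed edge set can be as large as $i$ times the length of a path, and nothing forces shortest paths in a multi-digraph on $n$ vertices to be short; a single previously routed path can cross a fixed $u$--$v$ cut many times, so the $T/s = \Theta(\alpha^{-3})$ slack factor in the cut inequality does not translate into an invariant that survives $s(s-1)$ rounds. Moreover, once you delete open paths the residual digraph is no longer Eulerian, so the factor-of-two saving $e(A,A^c)=e(A^c,A)$, which you rely on, is lost after the first iteration. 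For the Mader splitting-off route: directed splitting-off in an Eulerian digraph preserves \emph{local edge-connectivities} $\lambda(x,y)$ between the remaining vertices, not the full cut inequality $e(A,A^c)\ge\tfrac12|S\cap A|\,|S\setminus A|$ for arbitrary $A$ --- after collapsing to $T$ (or $s$) vertices you are only guaranteed $\lambda\ge(T-1)/2$, which is far below the $\Theta(T^2)$ balanced-cut capacity your fractional argument needs, and in any case you are then facing the original problem (immerse $\dK_s$ in a dense Eulerian multi-digraph on few vertices) with no new tool. More fundamentally, a min-cut condition does not imply integral multicommodity routability even in Eulerian digraphs, so this plan needs a theorem you haven't supplied.

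The paper sidesteps multicommodity routing entirely. It first splits off so that no vertex is incident to both multiple in-edges and multiple out-edges, then uses \Cref{lem:short-cycle} to extract $\Omega(\alpha^4 n^2)$ pairwise edge-disjoint \emph{short} directed cycles, each containing a simple edge. Those simple edges form an auxiliary undirected graph $H$ dense enough for \Cref{thm:devos-et-al}, giving an undirected $K_s$-immersion in $H$. The crucial point is the lift: every edge of an $H$-path corresponds to a directed cycle, whose two arcs give, simultaneously, a forward and a backward directed path between the same endpoints. Concatenating these along the $H$-paths yields all $s(s-1)$ directed paths at once, and edge-disjointness is automatic because the cycles were edge-disjoint. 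So the paper gets both directions ``for free'' from each cycle, which is exactly the structural information your cut-counting approach discards.
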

    
    An important step in the proof of \Cref{thm:from-dense-to-complete} is to find short directed cycles in a graph with large minimum out-degree. To realise this step, we use the next lemma which finds short directed cycles in simple weighted digraphs with large minimum degree.
    
    A \emph{weighted digraph} is a digraph $D$ equipped with a weight function $\omega : V(D) \to \R^{\ge 0}$. Given a weighted digraph $D$ with weight function $\omega$ and a subset $U \subseteq V(D)$, denote $\omega(U) := \sum_{u \in U} \omega(u)$.

    \begin{lemma} \label{lem:short-cycle-weighted}
        Let $D$ be a weighted simple digraph (bi-directed edges are allowed) with weight function $\omega$, satisfying $\omega(\Np(u)) \ge \alpha \cdot \omega(V(D))$ for every vertex $u$. Then, there is a directed cycle of length at most $4\alpha^{-1}$.
    \end{lemma}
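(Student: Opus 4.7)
The plan is to assume, for contradiction, that every directed cycle in $D$ has length strictly greater than $k := 4\alpha^{-1}$, and to derive a contradiction by combining BFS-ball estimates with a weighted averaging identity.

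First, I would reduce to the case where $D$ is strongly connected on $\mathrm{supp}(\omega)$. Take a minimal non-trivial out-closed subset $C \subseteq V(D)$ with $\omega(C) > 0$ (meaning $N^+(u) \subseteq C$ for all $u \in C$): minimality forces $D[C]$ to be strongly connected on $\mathrm{supp}(\omega|_C)$, while the hypothesis is preserved because $\omega(N^+(u)) \ge \alpha W \ge \alpha \omega(C)$ for $u \in C$. Any directed cycle in $D[C]$ is also a directed cycle in $D$ of the same length, so we may replace $D$ by $D[C]$ and assume that $D$ is strongly connected.

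Second, from the girth hypothesis, for every vertex $v$ and every pair $i, j \ge 0$ with $i + j \le k$, the out-ball $B^+_i(v)$ and in-ball $B^-_j(v)$ intersect only at $v$: any other common vertex $u$ would produce a directed cycle $v \to \cdots \to u \to \cdots \to v$ through $v$ of length at most $i + j \le k$. This yields the pointwise inequality $\omega(B^+_i(v)) + \omega(B^-_j(v)) \le W + \omega(v)$. The hypothesis $\omega(N^+(u)) \ge \alpha W$ immediately gives the trivial lower bound $\omega(B^+_j(v)) \ge \omega(v) + \alpha W$ for $j \ge 1$, and summing the pointwise inequality at $i = j = k/2$ over $v$ weighted by $\omega(v)$, combined with the symmetry identity $\sum_v \omega(v) \omega(B^+_{k/2}(v)) = \sum_v \omega(v) \omega(B^-_{k/2}(v))$ (obtained by swapping summation variables in the double sum), produces a uniform upper bound on the weighted ball sum that already handles the regime $\alpha$ close to $1$.

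The main obstacle, and where most of the work lies, is strengthening the lower bound on $\omega(B^+_j(v))$ to obtain roughly linear growth of the form $\omega(B^+_j(v)) \ge \min\{W,\, \omega(v) + c j \alpha W\}$ (for some absolute constant $c > 0$), valid for $j$ up to $\sim 2\alpha^{-1}$. I would prove this by induction on $j$, using strong connectivity (which guarantees that the BFS boundary $B^+_{j+1}(v) \setminus B^+_j(v)$ is nonempty for every $j$ less than the out-radius of $v$) together with the weighted neighborhood inequality
\[
\sum_{u \in B^+_j(v)} \omega(u)\, \omega\bigl(N^+(u) \setminus B^+_j(v)\bigr) \;\ge\; \omega(B^+_j(v))\, \bigl(\alpha W - \omega(B^+_j(v))\bigr),
\]
which lower bounds the total weight that spills into the next BFS layer. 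Plugging this linear growth estimate into the pair inequality at $i = j = k/2 = 2\alpha^{-1}$ then yields the required contradiction, as the left-hand side is forced to be close to $2W$ while the right-hand side is at most $(2 - 2\alpha) W < 2W$.
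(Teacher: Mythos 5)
Your framework has the right shape, and the final averaging step in fact closely parallels the paper's. Both proofs ultimately show that some vertex $u$ has a heavy out-ball \emph{and} a heavy in-ball of radius $\approx 2\alpha^{-1}$, intersect them, and obtain a short closed walk. Your pointwise disjointness inequality $\omega(B^+_i(v)) + \omega(B^-_j(v)) \le W + \omega(v)$ and the symmetry identity $\sum_v \omega(v)\omega(B^+_\ell(v)) = \sum_v \omega(v)\omega(B^-_\ell(v))$ are both correct and are essentially the paper's double counting in another guise. The reduction to a minimal out-closed $C$ is also fine.

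The gap is the linear-growth claim $\omega(B^+_j(v)) \ge \min\{W,\, \omega(v) + cj\alpha W\}$. The ``weighted neighborhood inequality'' you state only gives, after upper-bounding the left-hand side by $\omega(B^+_j(v)) \cdot \omega\bigl(B^+_{j+1}(v)\setminus B^+_j(v)\bigr)$,
\[
\omega\bigl(B^+_{j+1}(v)\setminus B^+_j(v)\bigr) \;\ge\; \alpha W - \omega\bigl(B^+_j(v)\bigr),
\]
which forces $\omega(B^+_j(v)) \ge \alpha W$ for $j \ge 1$ and then says nothing: once the ball has weight $\alpha W$, the right-hand side is $\le 0$ and the inequality is vacuous. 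Strong connectivity only guarantees the BFS boundary is nonempty as a set of vertices; it does not give it positive, let alone $\Omega(\alpha W)$, weight. Zero-weight (or tiny-weight) vertices can be added layer after layer with no weight gain, so the induction you sketch does not close. This is precisely the difficulty the paper's minimality choice is engineered to avoid: there, $U$ is taken minimal for the weighted expansion property $\omega(N^+(u)\cap U) \ge \alpha\,\omega(U)$ (not minimal out-closed), so that if the ball $N^+_i(u)$ stalls --- say the next layer has weight at most $\alpha/3$ while $\omega(N^+_i(u)) < 2/3$ --- then $N^+_i(u)$ is itself a strictly smaller set with the expansion property, contradicting minimality of $U$. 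Your choice of minimal out-closed $C$ does not rule out such proper expanding subsets, so the stalling scenario is not contradicted. To repair the proof you would need either the paper's minimality trick or some other mechanism guaranteeing that the weight (not the vertex count) of $B^+_j(v)$ grows; as written, the key step is unproven.
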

    
    \begin{proof}
        Let $U \subseteq V(D)$ be a minimal set satisfying that $\omega(\Np(u) \cap U) \ge \alpha \cdot \omega(U)$ for every $u \in U$. By possibly re-scaling $\omega$, we may assume that $\omega(U) = 1$.
        Write $D' = D[U]$. For a vertex $u \in U$ denote by $\Np_i(u)$ the set of vertices reachable from $u$ by a directed path of length at most $i$ in $D'$; define $\Nm_i(u)$ similarly. 
        
        Write $\ell = 2 \alpha^{-1}$. 
        We claim that $\omega(\Np_{\ell}(u)) \ge 2/3$ for every $u \in U$. Indeed, suppose that $\omega(\Np_{\ell}(u)) < 2/3$. Then some $i \in [\ell-1]$ satisfies $\omega(\Np_{i+1}(u) \setminus \Np_i(u)) \le \frac{2}{3\ell} \le \frac{\alpha}{3}$. This implies that for every vertex $v$ in $\Np_i(u)$ the following holds: $\omega(\Np(v) \cap \Np_i(u)) \ge \frac{2\alpha}{3} \ge \alpha \cdot \omega(\Np_i(u))$, contradicting the minimality of $U$.
        
        Next, we claim that there is a vertex $u$ for which $\omega(\Nm_{\ell}(u)) \ge 2/3$. Indeed, let $H$ be the weighted digraph on $U$ with weight function $\omega$, where $xy$ is an edge whenever there is a directed path of length at most $\ell$ in $D'$ from $x$ to $y$. Then $\omega(\Np_H(u)) \ge 2/3$ for every $u \in H$. We do a double counting as follows.
        \begin{equation*}
            \sum_{x \in U} \omega(x) \omega(\Nm_H(x))
            = \sum_{yx \in E(H)} \omega(x) \omega(y)
            = \sum_{y \in U} \omega(y) \omega(\Np_H(y))
            \ge \frac{2}{3} \cdot \sum_{y \in U}\omega(y)
            = 2/3.
        \end{equation*}
        It follows that there is a vertex $u$ with $\omega(\Nm_H(u)) \ge 2/3$; equivalently, $\omega(\Nm_{\ell}(u)) \ge 2/3$, as claimed.
        
        Let $u$ satisfy $\omega(\Nm_{\ell}(u)) \ge 2/3$. Since $\omega(\Np_{\ell}(u)) \ge 2/3$, there is a vertex $v$ such that $v \in \Np_{\ell}(u) \cap \Nm_{\ell}(u)$. Hence, there is a closed directed walk of length at most $2\ell$, implying that existence of a directed cycle of length at most $2\ell = 4\alpha^{-1}$.
    \end{proof}
    
    Next, we leverage \Cref{lem:short-cycle-weighted} to find directed cycles with few simple edges in digraphs with large minimum out-degree.
    
    \begin{lemma} \label{lem:short-cycle}
        Let $\alpha \in (0,1)$ and let $D$ be a multi-digraph (with no loops) on $n$ vertices with minimum out-degree at least $\alpha n$. Then there is a directed cycle with at most $4\alpha^{-1}$ simple edges. 
    \end{lemma}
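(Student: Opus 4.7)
The plan is to reduce Lemma~\ref{lem:short-cycle} to Lemma~\ref{lem:short-cycle-weighted}. Let $D'$ denote the simple digraph underlying $D$, obtained by collapsing parallel edges. Since a directed cycle of length $k$ in $D'$ corresponds to a directed cycle in $D$ using exactly $k$ distinct simple edges, it suffices to find a cycle of length at most $4\alpha^{-1}$ in $D'$.

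To apply Lemma~\ref{lem:short-cycle-weighted}, I would equip $D'$ with a vertex weight $\omega$ that records the multiplicities in $D$. A natural first attempt is $\omega(v) := \ddp_D(v)$, the multi-out-degree of $v$ in $D$: this gives $\omega(V(D')) = e(D)$ and pointwise $\omega(v) \ge \alpha n$. The task then reduces to verifying
\[
\omega(\Np_{D'}(u)) \,\ge\, \alpha \cdot \omega(V(D')) \quad \text{for every } u \in V(D),
\]
after which Lemma~\ref{lem:short-cycle-weighted} produces the desired short cycle in $D'$, and hence a cycle in $D$ using at most $4\alpha^{-1}$ simple edges.

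The main obstacle is precisely this weighted expansion hypothesis, since a vertex $u$ may concentrate its entire multi-out-degree budget of $\alpha n$ on very few distinct simple out-neighbours, so that the trivial bound $\omega(\Np_{D'}(u)) \ge \alpha n \cdot |\Np_{D'}(u)|$ falls well short of $\alpha \cdot e(D)$. To overcome this, I would mirror the minimality strategy used inside the proof of Lemma~\ref{lem:short-cycle-weighted}: pass to a minimal induced sub-multi-digraph $D[U]$ on which the weighted expansion is forced, and locate the short cycle within $D[U]$ through a BFS-style argument, using that the simple out-neighbourhoods inside $U$ now carry enough weight. Alternatively, if the weight $\ddp_D$ proves uncooperative, a more tailored choice (for instance combining $\ddp_D$ and $\ddm_D$, or introducing a bipartite vertex-gadget that replaces each multi-edge by a length-two path, doubling cycle lengths but eliminating multiplicities) should make the expansion inequality immediate and yield the bound $4\alpha^{-1}$ after absorbing the constant.
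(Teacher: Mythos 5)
Your plan to find a short cycle in the simple digraph $D'$ obtained by collapsing parallel edges is doomed at the outset, not merely at the weighted-expansion step you flag. Consider $D$ on $n$ vertices consisting of a directed Hamilton cycle $v_1 \to v_2 \to \cdots \to v_n \to v_1$ where each edge has multiplicity $\alpha n$. Then every vertex has out-degree $\alpha n$, yet the underlying simple digraph is the $n$-cycle, whose only directed cycle has length $n \gg 4\alpha^{-1}$. The lemma is nevertheless true here: that long cycle in $D$ uses no simple edges at all. This also exposes the false claim at the start of your argument, that a length-$k$ cycle in $D'$ uses exactly $k$ simple edges — it uses at most $k$, and possibly none. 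The upshot is that you must exploit multiple edges as ``free'' steps rather than collapse them away, and no choice of vertex weight on your $D'$ can rescue the reduction, because the structure you would be applying \Cref{lem:short-cycle-weighted} to simply has no short cycle.

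The paper's proof takes the opposite tack with its auxiliary simple digraph: it puts $xy$ into $D'$ precisely when the edge $x\to y$ has multiplicity at least two, so paths in $D'$ cost nothing in terms of simple edges. Assuming this $D'$ is acyclic (otherwise we are done), it partitions $V(D)$ into in-trees $U(x)$ rooted at the sinks $x$ of $D'$, so every vertex can reach its root for free. Contracting each $U(x)$ to a single vertex of weight $|U(x)|$ and putting an edge $x\to y$ whenever $D$ has an edge from $x$ into $U(y)$ (necessarily a simple edge, since $x$ is a sink of $D'$) produces a weighted simple digraph on which the hypothesis $\omega(\Np(x)) \ge \alpha \cdot \omega(V)$ is immediate from the out-degree assumption, because the sets $U(y)$ partition the vertex set. \Cref{lem:short-cycle-weighted} then gives a short cycle in the contracted digraph, and expanding each contracted vertex back to a free path through $U(x)$ yields a closed walk in $D$ with at most $4\alpha^{-1}$ simple edges. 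This contraction is the key idea your proposal is missing; the ``minimality'' trick and the vertex-gadget you sketch do not supply it, since neither separates the simple edges (which must be rationed) from the multiple ones (which are free).
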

    
    \begin{proof}
        Let $D'$ be the simple digraph on $V(D)$ where $xy$ is an edge whenever there are at least two directed edges in $D$ from $x$ to $y$. Let $X$ be the set of vertices with out-degree $0$ in $D'$. 
        
        \begin{claim}
            Either $D'$ contains a cycle or there is a partition $\{U(x) : x \in X\}$ of $V(D)$ such that $x \in U(x)$ and $x$ can be reached from each vertex in $U(x)$ in $D'$, for every $x \in X$. 
        \end{claim}
        
        \begin{proof}
            If $D'$ contains a directed cycle, we are done. We may therefore assume that this is not the case. 
            Write $X = \{x_1, \ldots, x_m\}$.
            Define subsets $U_1, \ldots, U_m \subseteq V(D')$ as follows. Suppose that $U_1, \ldots, U_{i-1}$ are defined. Let $D_i' = D' \setminus (U_1 \cup \ldots \cup U_{i-1})$ and let $U_i$ be the set of vertices $u$ for which there is a directed path from $u$ to $x_i$ in $D_i'$. Set $U(x_i) = U_i$. We claim that the collection $\{U(x): x \in X\}$ satisfies the requirements of the claim.
            
            First note that $x_i \in U_i$ for $i \in [m]$ (because there is no directed path in $D'$ between two distinct vertices in $X$, by choice of $X$). 
            Next, we show that $\{U_1, \ldots, U_m\}$ is a partition of $V(D')$. Indeed, clearly the sets $U_1, \ldots, U_m$ are pairwise disjoint. Now consider $u \in V(D')$. It is easy to see that there is a directed path from $u$ to $X$ (consider a maximal path from $u$ in $D'$, it must end in a vertex of out-degree $0$ since there are no directed cycles). Let $i$ be minimal such that there is a path $P$ from $u$ to $x_i$. Then none of the vertices in $P$ are in $U_1 \cup \ldots \cup U_{i-1}$ (by minimality of $i$). It follows that $x_i$ can be reached from $u$ in $D_i$, implying that $u \in U_i$. So $V(D) = U_1 \cup \ldots \cup U_m$.
        \end{proof}
        
        We assume that $D'$ is acyclic (otherwise $D$ has a cycle with no simple edges, as required).
        Note that $X$ is non-empty, as otherwise $D'$ contains a cycle. Let $\{U(x): x \in X\}$ be a partition of $V(D)$ as in the above claim.
        If there is an edge in $D$ from $x$ to $U(x)$ for some $x \in X$ then there is a directed cycle in $D$ with exactly one simple edge, as required. So suppose that there are no edges from $x$ to $U(x)$ for $x \in X$. 
       
        Let $H$ be an auxiliary weighted simple digraph on $X$, with weight function $\omega$ defined by $\omega(x) = |U(x)|$, and where $x y$ is an edge in $H$ whenever there is an edge in $D$ from $x$ to $U(y)$. By choice of $X$, every edge from $x$ to $U(y)$ in $D$ is a simple edge. By choice of $X$ and minimum out-degree assumption on $D$, we have $|\Np(x)| \ge \alpha n$ for $x \in X$. Thus
        \begin{equation*}
            \omega(\Np(x)) = \sum_{y \in X :\,\, U(y) \cap \Np(x) \neq \emptyset}|U(y)| \ge |\Np(x)| \ge \alpha n.
        \end{equation*}
        Since $\omega(X) = n$, it follows from \Cref{lem:short-cycle-weighted} that there is a cycle $C$ in $H$ of length at most $4\alpha^{-1}$. Write $C = (x_1 \ldots x_{\ell})$. For $i \in [\ell]$ let $y_i \in U(x_i)$ be such that $x_i y_{i+1}$ is an edge in $D$ (addition of indices is taken modulo $\ell$; such $y_i$ exists by definition of $H$). Let $P_i$ be a directed path in $D'$ from $y_i$ to $x_i$. Then $C' = (x_1 y_2 P_2 \ldots x_{\ell} y_1 P_1)$ is a closed walk in $D$ with $\ell$ simple edges. Since $\ell \le 4\alpha^{-1}$, this completes the proof. 
    \end{proof}
    
    Finally, we prove \Cref{thm:from-dense-to-complete}.
    
    \begin{proof}[Proof of \Cref{thm:from-dense-to-complete}]
    
        We first modify $D$ as follows. If there are three vertices $x, y, z$ such that both $xy$ and $yz$ are multiple edges, then remove a copy of $xy$ and $yz$ and add a copy of $xz$. Continue doing so until it is not longer possible and denote the resulting digraph by $D'$, and let $G'$ be the graph obtained from $D'$ by ignoring directions and multiplicities. Then $D'$ is an Eulerian multi-digraph which is immersed by $D$, and $G'$ has minimum degree at least $\alpha n$. Moreover, no vertex in $D'$ is incident to both multiple in-edges and multiple out-edges. Denote by $\Vm$ and $\Vp$ the vertices in $D'$ incident to multiple in-edges and multiple out-edges, respectively; so $\Vp$ and $\Vm$ are disjoint.
        
        \begin{claim}
            Let $\ell = 2^{-16} \alpha^4 n^2$. Then, there is a collection of $\ell$ pairwise edge-disjoint directed cycles in $D'$.
        \end{claim}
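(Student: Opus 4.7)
My plan is to extract the cycles greedily, one at a time, using Lemma~\ref{lem:short-cycle} together with two preparatory observations about $D'$. First, since $D'$ is Eulerian and the underlying simple graph $G'$ has minimum degree at least $\alpha n$, every vertex $v$ satisfies
\[
    2\ddp_{D'}(v) \,=\, \ddp_{D'}(v) + \ddm_{D'}(v) \,\ge\, |\Np(v)| + |\Nm(v)| \,\ge\, |\Np(v) \cup \Nm(v)| \,\ge\, \alpha n,
\]
and hence $\ddp_{D'}(v) \ge \alpha n/2$. Second, since $\Vp$ and $\Vm$ are disjoint, any multi-edge $uv$ in $D'$ forces $u \in \Vp$ and $v \in \Vm$, so in particular $v \notin \Vp$, meaning every edge leaving $v$ is simple. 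Thus in any directed cycle of $D'$ multi-edges are always followed by simple edges, so the number of simple edges in the cycle is at least half its length; edge-deletion only shrinks $\Vp$ and $\Vm$, so the same conclusion holds for every sub-multidigraph of $D'$.

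I would then iteratively remove short cycles. Set $D_0 = D'$, and having chosen edge-disjoint cycles $C_0, \ldots, C_{i-1}$, let $D_i = D_0 \setminus (E(C_0) \cup \cdots \cup E(C_{i-1}))$; this is Eulerian because cycle deletion preserves Eulerianness. Write $U_i = \{v : \ddp_{D_i}(v) \ge \alpha n / 4\}$. Assuming inductively that each of $C_0, \ldots, C_{i-1}$ has length at most $64/\alpha$, the total number of (vertex, cycle) incidences is at most $64i/\alpha$; since every $v \notin U_i$ lies on more than $\ddp_{D'}(v) - \alpha n/4 \ge \alpha n / 4$ of these cycles, one obtains
\[
    |V(D') \setminus U_i| \,\le\, \frac{64i/\alpha}{\alpha n/4} \,=\, \frac{256\,i}{\alpha^2 n}.
\]
For every $i \le \ell = 2^{-16}\alpha^4 n^2$ this is at most $2^{-8}\alpha^2 n \le \alpha n/16$, using $\alpha \le 1$.

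Consequently, every $v \in U_i$ has out-degree at least $\alpha n/4 - |V(D')\setminus U_i| \ge \alpha n/8$ inside $D_i[U_i]$, so $D_i[U_i]$ has minimum out-degree at least $(\alpha/8)|U_i|$. By Lemma~\ref{lem:short-cycle} applied to $D_i[U_i]$ with parameter $\alpha/8$, there is a directed cycle $C_i \subseteq D_i$ with at most $32/\alpha$ simple edges, and by the second preparatory observation applied to $D_i$ this cycle has total length at most $64/\alpha$, closing the induction. Iterating for $i = 0, 1, \ldots, \ell - 1$ produces the required $\ell$ pairwise edge-disjoint directed cycles in $D'$.

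The main obstacle I foresee is the circular dependence between the bound on $|V(D') \setminus U_i|$, which needs an a priori bound on the length of the previously extracted cycles, and the bound on the length of $C_i$, which itself relies on $|V(D')\setminus U_i|$ being small enough to apply Lemma~\ref{lem:short-cycle} to $D_i[U_i]$. The disjointness of $\Vp$ and $\Vm$ resolves this neatly, converting the ``few simple edges'' guarantee of Lemma~\ref{lem:short-cycle} into an unconditional bound on cycle length that is strong enough to feed the induction on $i$.
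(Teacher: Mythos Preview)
Your overall strategy---greedily extracting short cycles, maintaining the induction via a uniform length bound, and converting ``few simple edges'' into ``short cycle'' using the disjointness of $\Vp$ and $\Vm$---matches the paper's. However, the step
\[
    \ddp_{D_i[U_i]}(v) \;\ge\; \ddp_{D_i}(v) - |V(D') \setminus U_i|
\]
has a real gap. This inequality would be valid if every out-edge of $v$ landing in $V(D')\setminus U_i$ were simple, but $D_i$ is a multi-digraph: if $v \in \Vp$ then $v$ may send many parallel edges to a single vertex $w \in \Vm$, and nothing prevents such a $w$ from belonging to $V(D')\setminus U_i$. In that case deleting $w$ removes far more than one unit of out-degree from $v$, and your lower bound on $\ddp_{D_i[U_i]}(v)$ collapses. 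Concretely, the bound on $|V(D')\setminus U_i|$ is of order $\alpha^2 n$, while a single vertex of $\Vp$ can have out-degree up to $n$ concentrated on very few out-neighbours.

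The paper's proof addresses exactly this obstruction by introducing a second set $Y_i$, consisting of the vertices sending at least $\alpha n/16$ out-edges (with multiplicity) into the ``bad'' set $X_i$. One first observes that $Y_i \subseteq \Vp$, since a vertex outside $\Vp$ has only simple out-edges and $|X_i| < \alpha n/16$; one then bounds $|Y_i|$ using that the maximum in-degree in $D'$ is at most $n$ (a consequence of the splitting-off preprocessing). After deleting both $X_i$ and $Y_i$, each surviving vertex sends at most $\alpha n/16$ edges into $X_i$ by definition of $Y_i$, and at most $|Y_i| \le \alpha n/16$ edges into $Y_i$ because edges into $\Vp$ are necessarily simple. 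This is what recovers the $\alpha n/8$ minimum out-degree needed to invoke \Cref{lem:short-cycle}. Your argument can be repaired by inserting this extra layer of deletion.
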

        
        \begin{proof}
            We define directed cycles $C_1, \ldots, C_{\ell}$ as follows. Suppose that $C_1, \ldots, C_{i-1}$ are chosen. Set $D_i = D' \setminus (C_1 \cup \ldots \cup C_{i-1})$ (here we take into account multiplicities), and let $C_i$ be a shortest directed cycle in $D_i$. 
            
            We claim that there is such a cycle $C_i$ and that $C_i$ has length at most $64 \alpha^{-1}$, for $i \in [\ell]$. 
            Suppose that this is the case for $j < i$, where $i \in [\ell]$. Let $X_i$ be the set of vertices that appear in at least $\alpha n / 4$ of the cycles $C_1, \ldots, C_{i-1}$. Then 
            \begin{equation*}
                |X_i| 
                \le \frac{\ell \cdot 64 \alpha^{-1}}{\alpha n / 4} 
                \le \frac{\alpha^2 n}{256}.
            \end{equation*} 
            Let $Y_i$ be the set of vertices with out-degree at least $\alpha n / 16$ in $X_i$. 
            Then $Y_i \subseteq \Vp$, because vertices outside of $\Vp$ send at most $|X_i|$ out-edges to $X_i$. 
            Note that the maximum in-degree in $D'$ is at most $n$, because $D'$ is Eulerian and every vertex has maximum in- or out-degree at most $n$. It follows that 
            \begin{equation*}
                |Y_i| \le \frac{|X_i|n}{\alpha n / 16} \le \frac{\alpha n}{16}.
            \end{equation*}
            Set $D_i' = D_i \setminus (X_i \cup Y_i)$.
            We claim that $D_i'$ has minimum out-degree at least $\alpha n / 8$. Indeed, let $u \in V(D_i')$. Observe that $u$ has out-degree at least $\alpha n/2$ in $D'$; this follows from $G'$ having minimum degree at least $\alpha n$.
            Thus, since $u$ is not in $X_i$, it has out-degree at least $\alpha n / 4$ in $D_i$. Moreover, since it is not in $Y_i$, it sends at most $\alpha n / 16$ out-edges to $X_i$. Additionally, $u$ sends at most $|Y_i|$ out-edges to $Y_i$, because $Y_i \subseteq \Vp$. It follows that $u$ has out-degree at least $\alpha n / 8$ in $D_i'$, as required.
            
            By \Cref{lem:short-cycle}, there is a directed cycle $C$ in $D_i'$ all but at most $32\alpha^{-1}$ of its edges are simple. By the structure of $D'$, there cannot be two consecutive multiple edges in $C$. It follows that $C$ has length at most $64\alpha^{-1}$. This implies that the cycle $C_i$ exists and has length at most $64 \alpha^{-1}$, as required.
        \end{proof}
        
        Let $\CC$ be a collection of at least $\ell$ pairwise edge-disjoint directed cycles in $D'$. For each $C \in \CC$, let $e(C)$ be an edge of $C$ which is simple in $D'$ (the structure of $D'$ implies that such an edge exists). 
        We define a graph $H$ on $V(D')$ as follows. For each $C \in \CC$, add the edge $e(C)$ to $H$, ignoring its direction. Then $e(H) \ge \ell = 2^{-16} \alpha^4 n^2$, so $H$ has a subgraph with minimum degree at least $2^{-16} \alpha^4 n$. 
        By \Cref{thm:devos-et-al}, the graph $H$ immerses $K_s$, where $s = 10^{-9} \alpha^4 n$. This means that there is a set $X$ of $s$ vertices and a path $P_{xy}$ from $x$ to $y$ for every two vertices $x, y \in X$, such that these paths are pairwise edge-disjoint. We show that $D'$ immerses $\dK_s$.
        
        Fix two vertices $x, y \in X$. Write $P_{xy} = (x_0, \ldots, x_r)$. For each $i \in [r]$, let $C_i$ be the cycle in $\CC$ for which $e(C_i) = x_{i-1} x_i$, let $Q_i$ be the subpath of $C_i$ from $x_{i-1}$ to $x_i$ and let $Q_i'$ be the subpath of $C_i$ from $x_i$ to $x_{i-1}$ (one of $Q_i$ and $Q_i'$ is an edge). Define $D_{xy} = (Q_1 \ldots Q_r)$ and $D_{yx} = (Q_r' \ldots Q_1')$. Then $D_{xy}$ is a directed path from $x$ to $y$ and $D_{yx}$ is a directed path from $y$ to $x$. It is easy to see that the paths $D_{xy}$, with $x,y \in X$ and $x \neq y$, are pairwise edge-disjoint. The union of these paths yields an immersion of $\dK_s$ in $D'$, as required.
    \end{proof}
    \section{Proof of Theorem~\ref{thm:main}}\label{sec: proofmain}
    
     \begin{proof}
        We may assume $t$ is large by taking $\alpha$ to be a sufficiently large constant and using that an Eulerian simple digraph with minimum out-degree $t^2$ contains an immersion of $\dK_t$, as proved in \cite{devoss2010immersing}.
        
        Let $\beta > 0$ be a constant as in \Cref{thm:find-dense-immersion}, and write $k = 10^{11} \beta^8 t$. We will show that every simple Eulerian digraph with minimum in-degree at least $100k$ immerses $\dK_t$, showing that the statement holds with $\alpha = 10^{13} \beta^8$. 
        
        Let $D$ be a simple Eulerian digraph with minimum in-degree at least $100k$.
        By \Cref{thm:find-dense-immersion}, $D$ immerses a simple digraph $D'$ on at most $\beta k$ vertices and at least $k^2/2$ edges. Let $G'$ be the graph obtained from $D'$ by ignoring directions, let $G''$ be a subgraph of $G'$ with minimum degree at least $d(G')/2$, and let $D''$ be an orientation of $G''$ which is a subgraph of $D'$. Write $n = |G''|$ and $\alpha = 1/2\beta^2$. Note that $d(G') = 2e(D')/|D'| \ge k/\beta$, showing $\delta(G'') \ge k/2\beta \ge n / 2\beta^2 = \alpha n$. Applying Lemma~\ref{lem:complete-to-eulerian}, with $D''$ playing the role of $D'$, we obtain an Eulerian multi-digraph $D'''$ on the same vertex set as $D''$ which contains $D''$ as a subdigraph and is immersed by $D$. 
        By \Cref{thm:from-dense-to-complete}, $D'''$ immerses $\dK_s$, where $s = 10^{-9} \alpha^4 n \ge \frac{k}{10^9 2^4 \beta^8} \ge t$, as claimed. 
    \end{proof}
    
\section{Concluding remarks} \label{sec:conclusion}

    As stated in the introduction, Lochet proved that for every positive integer $k$, there exists $f(k)$ such that any digraph with minimum out-degree at least $f(k)$ contains a immersion of a transitive tournament on $k$ vertices. This is essentially best possible since, as we already pointed out, there are digraph with arbitrarily large minimum-out degree which do not contain an immersion of a $\dK_3$ (see \cite{mader1985degree,devos2012}).
    
    Lochet's proof allowed him to show $f(k)\leq O(k^3)$. We suspect that a linear bound would suffice. 
    
    \begin{conjecture}
        There exists an absolute constant $C>0$ such that for any positive integer $k$ the following holds. Let $D$ be a digraph with $\delta^{+}(D)\geq Ck$. Then $D$ immerses a transitive tournament on $k$ vertices. 
    \end{conjecture}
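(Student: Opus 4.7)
The plan is to port the sublinear-expansion framework developed in this paper to the non-Eulerian setting, replacing two-sided expansion with a one-sided analogue. First I would establish a directed-out version of \Cref{lem:find-directed-expander}: every digraph with minimum out-degree at least $Ck$ contains, as an immersion, a subdigraph $D'$ that is a robust directed \emph{out}-vertex-expander (every $X \subseteq V(D')$ with $|X| \le |D'|/2$ satisfies $|N^+_{D' \setminus F}(X)| \ge \rho_k(|X|)|X|$ for every $F$ with few edges) and still has minimum out-degree $\Omega(k)$. The weight-descent argument underlying \Cref{lem:find-expander} should carry over since only out-degrees need to be tracked, and the Eulerian symmetry that was used to convert edge-expansion into two-sided vertex-expansion in \Cref{lem:find-directed-expander} is not needed here.

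Given such $D'$, I would construct the transitive tournament on vertices $v_1, \ldots, v_k$ greedily, ordered from source to sink, adding one new sink at each step together with edge-disjoint directed paths from each existing $v_j$ to the new sink $v_{i+1}$, avoiding previously used edges. The natural candidate for $v_{i+1}$ is a vertex in the intersection of the out-balls $B^+_r(v_j)$ of radius $r = O((\log |D'|)^c)$ in $D' \setminus F_i$, where $F_i$ collects the used edges together with a small set of high-degree-in-$F_i$ vertices. A ball-growing argument in the spirit of \Cref{claim:expand-in-ball}, adapted to one-sided expansion, forces each such ball to have size at least $(1 - o(1))|D'|/k$, so the intersection is non-empty; one then extracts the $i$ required edge-disjoint paths that all terminate at $v_{i+1}$ by a Menger-type argument inside the union of balls.

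The chief obstacle is that \Cref{lem:short-paths-diam}, the workhorse for path-finding, relies crucially on expanding simultaneously from both endpoints and meeting in the middle; without in-expansion we cannot grow a large in-ball around $v_{i+1}$ to match the out-balls of $v_1, \ldots, v_i$. This is why I propose to commit to $v_{i+1}$ \emph{first} (as a common element of many out-balls) rather than constructing paths between pre-specified endpoints. Proving that the required edge-disjoint terminating paths exist in a set obtained by several parallel BFSs, rather than a single back-to-back one, is the main technical difficulty and likely requires a careful branching-process analysis or a flow argument inside the expander. A further subtlety is the book-keeping: after $\binom{k}{2}$ paths of polylogarithmic length have been deleted, the initial threshold $Ck$ must absorb the corresponding polylogarithmic losses in expansion, which has to be anticipated when choosing the parameters of the expander at the start.
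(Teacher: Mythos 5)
This statement is a \emph{conjecture} in the paper, not a theorem: the authors explicitly write ``We suspect that a linear bound would suffice'' and leave the problem open (the best known bound, due to Lochet, is $f(k) = O(k^3)$). There is therefore no proof in the paper against which to compare your attempt, and you should treat anything you write here as a research sketch rather than a verification.

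That said, your sketch contains at least two genuine gaps beyond the ones you flag yourself. First, the passage from undirected edge-expansion to \emph{one-sided} out-vertex-expansion in \Cref{lem:find-directed-expander} is not a cosmetic use of Eulerian-ness that can simply be dropped: the undirected expander guarantees that $e(X,X^c)+e(X^c,X)$ is large, and the paper uses $e_{D''}(X,X^c)=e_{D''}(X^c,X)$ (a consequence of being Eulerian, obtained via \Cref{lem:complete-to-eulerian}) to conclude that \emph{each} direction expands. In a general digraph with only an out-degree bound, maximising the analogue of $\phi(H)=d(H)(1+\gamma(|H|))$ over subgraphs gives you control of the undirected edge-boundary, but there is no reason why the sets $X$ whose out-boundary $e(X,X^c)$ is tiny cannot exist; indeed, a vertex with enormous in-degree and out-degree exactly $Ck$ can sit at the ``bottom'' of such a set. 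You would need a new potential that incorporates out-degrees specifically, and deleting vertices during the descent can destroy out-degrees of the survivors, so the naive adaptation does not close.

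Second, the step ``forces each such ball to have size at least $(1-o(1))|D'|/k$, so the intersection is non-empty'' is quantitatively wrong as stated: $k$ sets each of size $(1-o(1))n/k$ inside an $n$-set need not share a common point. For $k$ out-balls to have nonempty intersection by a union bound, each must miss fewer than $n/k$ vertices, i.e.\ have size $>(1-1/k)n$; making BFS reach $(1-1/k)n$ vertices is a much stronger statement than what \Cref{claim:expand-in-ball}-style growth gives (that argument only pushes the ball past size $k(\log n)^6$, after which the paper switches to two-sided expansion via \Cref{lem:short-paths-diam}, precisely the tool you cannot use). Finally, even if $v_{i+1}$ is found in the common intersection, nonempty intersection of balls does not by itself provide $i$ \emph{edge-disjoint} paths terminating at $v_{i+1}$; a Menger or flow argument in a residual digraph obtained by deleting $O(k^2\,\mathrm{polylog})$ previously used edges requires a robustness property you have not established. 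These are the places where a real idea is missing, and they are consistent with the problem remaining open.
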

    
    To conclude, we reiterate Mader's question about subdivisions of transitive tournaments in digraphs with large minimum out-degree.
    
    \begin{question}[Mader \cite{mader1996topological}]
        Is there a function $f$ such that, for every integer $k \ge 1$, every digraph with minimum out-degree at least $f(k)$ contains a subdivision of a transitive tournament on $k$ vertices?
    \end{question}
    
    \section{Acknowledgements}
    We would like to thank Paul Wollan for an insightful conversation on the topic of immersions. 

    \bibliography{immersion.bib}

\providecommand{\bysame}{\leavevmode\hbox to3em{\hrulefill}\thinspace}
\providecommand{\MR}{\relax\ifhmode\unskip\space\fi MR }
\providecommand{\MRhref}[2]{%
  \href{http://www.ams.org/mathscinet-getitem?mr=#1}{#2}
}
\providecommand{\href}[2]{#2}
\begin{thebibliography}{10}

\bibitem{bollobas1996highly}
B.~Bollob{\'a}s and A.~Thomason, \emph{Highly linked graphs}, Combinatorica
  \textbf{16} (1996), 313--320.

\bibitem{devos2014minimum}
M.~DeVos, Z.~Dvo{\v{r}}{\'a}k, J.~Fox, J.~McDonald, B.~Mohar, and D.~Scheide,
  \emph{A minimum degree condition forcing complete graph immersion},
  Combinatorica \textbf{34} (2014), 279--298.

\bibitem{devos2012}
M.~DeVos, J.~McDonald, B.~Mohar, and D.~Scheide, \emph{Immersing complete
  digraphs}, European J. Combin. \textbf{33} (2012), 1294--1302.

\bibitem{devos2013note}
\bysame, \emph{A note on forbidding clique immersions}, Electron. J. Combin.
  \textbf{20} (2013), P55.

\bibitem{devoss2010immersing}
M.~DeVoss, K.~Kawarabayashi, B.~Mohar, and H.~Okamura, \emph{Immersing small
  complete graphs}, Ars Math. Contemp. \textbf{3} (2010), 139--146.

\bibitem{dvovrak2018complete}
Z.~Dvo{\v{r}}{\'a}k and L.~Yepremyan, \emph{Complete graph immersions and
  minimum degree}, J. Graph Theory \textbf{88} (2018), 211--221.

\bibitem{fernandez2022nested}
I.~G. Fern\'{a}ndez, J.~Kim, Y.~Kim, and H.~Liu, \emph{Nested cycles with no
  geometric crossings}, Proc. Amer. Math. Soc. Ser. B \textbf{9} (2022),
  22--32.

\bibitem{haslegrave2021extremal}
J.~Haslegrave, J.~Kim, and H.~Liu, \emph{Extremal density for sparse minors and
  subdivisions}, Int. Math. Res. Not. (2021), rnab154.

\bibitem{jiang2021rainbow}
T.~Jiang, A.~Methuku, and L.~Yepremyan, \emph{{Rainbow Tur\'an number of clique
  subdivisions}}, arXiv:2106.13803 (2021).

\bibitem{kim2017proof}
J.~Kim, H.~Liu, M.~Sharifzadeh, and K.~Staden, \emph{{Proof of Koml{\'o}s's
  conjecture on Hamiltonian subsets}}, Proc. London Math. Soc. \textbf{115}
  (2017), 974--1013.

\bibitem{komlos1994topological}
J.~Koml{\'o}s and E.~Szemer{\'e}di, \emph{Topological cliques in graphs},
  Combin. Probab. Comput. \textbf{3} (1994), 247--256.

\bibitem{komlos1996topological}
\bysame, \emph{{Topological cliques in graphs II}}, Combin. Probab. Comput.
  \textbf{5} (1996), 79--90.

\bibitem{kuhn2006extremal}
D.~K{\"u}hn and D.~Osthus, \emph{Extremal connectivity for topological cliques
  in bipartite graphs}, J. Combin. Theory Ser. B \textbf{96} (2006), 73--99.

\bibitem{liu2017proof}
H.~Liu and R.~Montgomery, \emph{{A proof of Mader's conjecture on large clique
  subdivisions in $C_4$-free graphs}}, J. London Math. Soc. \textbf{95} (2017),
  203--222.

\bibitem{liu2020solution}
\bysame, \emph{{A solution to Erd\H{o}s and Hajnal's odd cycle problem}},
  arXiv:2010.15802 (2020).

\bibitem{liu2020clique}
H.~Liu, G.~Wang, and D.~Yang, \emph{Clique immersion in graphs without fixed
  bipartite graph}, arXiv:2011.10961 (2020).

\bibitem{lochet2019immersion}
W.~Lochet, \emph{Immersion of transitive tournaments in digraphs with large
  minimum outdegree}, J. Combin. Theory Ser. B \textbf{134} (2019), 350--353.

\bibitem{mader1985degree}
W.~Mader, \emph{Degree and local connectivity in digraphs}, Combinatorica
  \textbf{5} (1985), 161--165.

\bibitem{mader1996topological}
\bysame, \emph{On topological tournaments of order 4 in digraphs of outdegree
  3}, J. Graph Theory \textbf{21} (1996), 371--376.

\bibitem{thomassen1985even}
C.~Thomassen, \emph{Even cycles in directed graphs}, Europ. J. Combin.
  \textbf{6} (1985), 85--89.

\end{thebibliography}
    \bibliographystyle{amsplain}
    
\appendix
    
\section{Expander lemmas} \label{appendix}
    
    \subsection{Proof of \Cref{lem:find-expander}}
    \begin{proof}
        Define $\gamma(x) = \min\{1, \frac{1}{\log(2x/t)}\}$ and $\rho(x) := \rho_t(x)$ for $x > 0$.
        For a graph $H$ define $\phi(H) := d(H)(1 + \gamma(|H|))$.
        
        Let $H$ be a subgraph of $G$ satisfying $\phi(H) = \max\{\phi(G') : G' \subseteq G\}$. We will show that $H$ satisfies the requirements of the lemma.
        
        Write $n := |H|$ and $d := d(H)$.
        First, by choice of $H$, we have $\phi(H) \ge \phi(G)$, implying that $d = d(H) \ge \frac{d(G)(1 + \gamma(|G|)}{1 + \gamma(|H|)} \ge \frac{d(G)}{2}$, using that $\gamma(x) \in [0,1]$ for $x > 0$.
        For a subset $S \subseteq V(H)$, write $d(S) := d(H[S])$.
        \begin{claim} \label{claim:d-min}
            Let $S \subseteq V(H)$. Then $d(S) \le d$. Moreover, if $t \le |S| \le \frac{2n}{3}$ then $d(S) \le d \cdot (1 - 64\rho(|S|))$.
        \end{claim}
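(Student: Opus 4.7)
The plan is to derive both bounds directly from the fact that $H$ was chosen to maximise $\phi$ among subgraphs of $G$. For any $S \subseteq V(H)$ with $|S| \ge t$, applying this maximality to $H[S]$ gives $d(S)(1+\gamma(|S|)) \le d(1+\gamma(n))$, and hence
\begin{equation*}
    d(S) \;\le\; d\cdot\frac{1+\gamma(n)}{1+\gamma(|S|)}.
\end{equation*}
Since $\gamma$ is non-increasing on $[t,\infty)$ and $|S|\le n$, the ratio on the right is at most $1$, which settles $d(S)\le d$. The edge case $|S|<t$ is either trivial (as $H[S]$ is sparse) or handled by the same argument with the convention $\gamma(|S|)=1$.

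For the \emph{moreover} part, I would show that under $t\le|S|\le 2n/3$ the same ratio is in fact bounded by $1-64\rho(|S|)$. Rearranging, this is equivalent to the algebraic inequality
\begin{equation*}
    \gamma(|S|) - \gamma(n) \;\ge\; 64\rho(|S|)\bigl(1+\gamma(|S|)\bigr).
\end{equation*}
Introducing $u=\log(2|S|/t)\ge 1$ and $v=\log(2n/t)\ge u$, so that $\gamma(|S|)=1/u$, $\gamma(n)=1/v$ and $\rho(|S|)=1/(256(u+1)^2)$, and clearing denominators, the inequality reduces to $v-u \ge u/(4u+3)$.

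To finish, the hypothesis $|S|\le 2n/3$ translates directly to $v-u \ge \log(3/2)$, while $u/(4u+3) < 1/4$ for every $u\ge 0$. Since $\log(3/2) > 1/4$, the bound $v-u\ge u/(4u+3)$ is immediate. The whole argument is essentially mechanical and presents no serious conceptual obstacle; the only subtlety is confirming that the constants in the statement (the threshold $2n/3$, the coefficient $64$, and the factor $256$ appearing in the definition of $\rho$) are balanced so that the final comparison $\log(3/2) > 1/4$ has slack. In particular, the choice of $2/3$ is exactly what is needed to beat the universal bound $u/(4u+3)<1/4$.
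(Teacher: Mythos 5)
Your proof is correct, and it follows essentially the same route as the paper: both deduce $d(S)(1+\gamma(|S|)) \le d(1+\gamma(n))$ from the $\phi$-maximality of $H$, then exploit $|S| \le 2n/3$ and the monotonicity of $\gamma$ to close the gap. The only difference is in the final algebra: the paper substitutes $\gamma(n) \le \gamma(3s/2)$, uses $1+\gamma(s)\le 2$ and then bounds $\frac{\log(3/2)}{2\log(2s/t)\log(3s/t)}$ from below; you instead rewrite everything in the variables $u=\log(2|S|/t)$, $v=\log(2n/t)$ and reduce the whole thing to the transparent inequality $v-u \ge u/(4u+3)$, which follows from $\log(3/2) > 1/4 > u/(4u+3)$. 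Your reformulation is arguably a bit cleaner as a finishing move, but the two arguments are the same in substance, so there is nothing to add.
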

        \begin{proof}
            By choice of $H$ we have $d \cdot (1 + \gamma(n)) = d(H)(1 + \gamma(|H|)) \ge d(S)(1 + \gamma(|S|)) \ge d(S)(1 + \gamma(n))$, where the second inequality follows from $\gamma$ being non-increasing. It follows that $d(S) \le d$, as claimed.
            
            Now suppose that $t \le |S| \le \frac{2n}{3}$, and write $s := |S|$. Again by choice of $H$, we have $d(S) \le d \cdot \frac{1 + \gamma(n)}{1 + \gamma(s)}$. Using that $\gamma$ is non-increasing, $\gamma(x) \le 1$ for every $x$, and $t \le s \le \frac{2n}{3}$, we obtain the following chain of inequalities.
            \begin{align*}
                \frac{1 + \gamma(n)}{1 + \gamma(s)}
                & \le \frac{1 + \gamma(3s/2)}{1 + \gamma(s)}
                = 1 - \frac{\gamma(s) - \gamma(3s/2)}{1 + \gamma(s)}
                \le 1 - \frac{\gamma(s) - \gamma(3s/2)}{2}
                = 1 - \frac{\frac{1}{\log(2s/t)} - \frac{1}{\log(3s/t)}}{2} \\
                & = 1 - \frac{\log(3s/t) - \log(2s/t)}{2\log(2s/t)\log(3s/t)}
                \le 1 - \frac{1}{4(\log(4s/t))^2}
                = 1 - 64\rho(s),
            \end{align*}
            implying that $d(S) \le d \cdot (1 - 64\rho(s))$, as claimed.
        \end{proof}
        
        \begin{claim} \label{claim:min-deg}
            Let $S \subset V(H)$. Then $e(S) + e(S, S^c) \ge \frac{d|S|}{2}$. In particular, $\delta(H) \ge \frac{d}{2} \ge \frac{d(G)}{4}$.
        \end{claim}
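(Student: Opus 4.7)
The plan is to prove the claim by contradiction, using the maximality of $H$ in terms of the potential function $\phi$. Concretely, I would assume that some proper subset $S \subsetneq V(H)$ violates the inequality, and then show that $H' := H \setminus S$ would have strictly larger $\phi$-value than $H$, contradicting the choice of $H$.

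First I would dispose of the edge case $S = V(H)$: then $e(S) + e(S, S^c) = e(H) = dn/2 = d|S|/2$, so the inequality holds with equality. From now on I assume $S$ is a proper subset, and I suppose for contradiction that $e(S) + e(S, S^c) < d|S|/2$. Setting $H' = H \setminus S$, the key calculation is
\begin{equation*}
    e(H') \;=\; e(H) - e(S) - e(S, S^c) \;>\; \frac{dn}{2} - \frac{d|S|}{2} \;=\; \frac{d(n - |S|)}{2},
\end{equation*}
which gives $d(H') = 2e(H')/|H'| > d$.

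Next, since $|H'| < n$ and $\gamma$ is non-increasing (this was used already in the proof of Claim~\ref{claim:d-min}), we have $\gamma(|H'|) \ge \gamma(n)$, and therefore
\begin{equation*}
    \phi(H') \;=\; d(H')\bigl(1 + \gamma(|H'|)\bigr) \;\ge\; d(H')\bigl(1 + \gamma(n)\bigr) \;>\; d\bigl(1 + \gamma(n)\bigr) \;=\; \phi(H),
\end{equation*}
contradicting the maximality of $\phi(H)$ among subgraphs of $G$. This proves the main inequality.

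Finally, for the ``in particular'' part, apply the inequality to a singleton $S = \{v\}$: then $e(S) = 0$ and $e(S, S^c) = \deg_H(v)$, so $\deg_H(v) \ge d/2$, and since $d = d(H) \ge d(G)/2$ (shown just before the claims), we get $\delta(H) \ge d(G)/4$. I do not expect any real obstacle here — the argument is just a one-step swap based on the extremality of $H$, paralleling standard arguments for the existence of subgraphs of large minimum degree in a graph of large average degree.
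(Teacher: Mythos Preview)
Your proof is correct and follows essentially the same route as the paper: assume the inequality fails, deduce that the complement $S^c$ carries more than $\tfrac{d|S^c|}{2}$ edges, and contradict the extremality of $H$. The only cosmetic difference is that the paper invokes the first part of Claim~\ref{claim:d-min} (namely $d(S^c) \le d$) to reach the contradiction in one line, whereas you unroll that step and appeal directly to the maximality of $\phi$; the content is the same.
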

        \begin{proof}
            Suppose not, then $e(S^c) > \frac{d|S^c|}{2}$, contradicting the previous claim.
        \end{proof}
        Let $S \subseteq V(H)$ satisfy $t \le |S| \le n/2$. The last two claims imply that 
        \begin{equation*} 
            e(S, S^c) 
            \ge \frac{d|S|}{2} - e(S)
            = \frac{(d - d(S))\cdot |S|}{2}
            \ge 32d \cdot \rho(|S|)|S|,
        \end{equation*}
        so the required edge-expansion property holds (note that it holds vacuously when $|S| < t$). 
        
        To prove the vertex-expansion property, let $F \subseteq H$ satisfy $e(F) \le d \cdot \rho(|S|)|S|$ and write $T = S \cup N_{G \setminus F} (S)$. If $|T| \ge \frac{4}{3}|S|$ then $|N_{G \setminus F}(S)| \ge \frac{1}{3}|S| \ge 2\rho(|S|)|S|$ (using $\rho(x) \le 1/256$ for all $x$), as required. So we may assume $|T| \le \frac{4}{3}|S| \le \frac{2n}{3}$. By \Cref{claim:d-min}, $d(T) \le d \cdot (1 - 64\rho(|T|))$, implying that 
        \begin{equation*}
            e(T) \le \frac{d|T|(1 - 64\rho(|T|))}{2}.
        \end{equation*}
        By \Cref{claim:min-deg},
        \begin{equation*}
            e(T) 
            \ge e(S) + e(S, S^c) - e(F)
            \ge \frac{d|S|}{2} - e(F) 
            \ge \frac{d|S|(1 - 2\rho(|S|)}{2}.
        \end{equation*}
        Putting the two inequalities we get $|S|(1 - 2\rho(|S|)) \le |T|(1 - 64\rho(|T|))$, implying the following, where $s := |S|$,
        \begin{equation*}
            \frac{|T|}{|S|} 
            \ge \frac{1 - 2\rho(|S|)}{1 - 64\rho(|T|)}
            \ge \frac{1 - 2\rho(s)}{1 - 64\rho(2s)}
            = 1 + \frac{64\rho(2s) - 2\rho(s)}{1 - 64\rho(2s)}
            \ge 1 + \frac{14\rho(s)}{1 - 64\rho(2s)}
            \ge 1 + 14\rho(s),
        \end{equation*}
        where we used the inequality $4\rho(2s) \ge \rho(s)$. To see this, since $s \ge t$, it suffices to observe that $\frac{1}{2}\log(8s/t) = \frac{1}{2}(\log(4s/t) + 1) \le \log(4s/t)$.
        It follows that $|N_{G \setminus F}(S)| = |T| - |S| \ge 14\rho(|S|)|S| > \rho(|S|)|S|$, as claimed.
    \end{proof}
    
    \subsection{Proof of \Cref{lem:short-paths-diam}}
    \begin{proof}
        Write $D' = D \setminus F$.
        Let $X_i$ be the set of vertices $x$ for which there is a directed path (in $D'$) of length at most $i$ that starts at $X$ and ends in $x$, and let $Y_i$ be the set of vertices $y$ for which there is a directed path of length at most $i$ that starts at $y$ and ends in $Y$.  
        
        By expansion, if $|X_i| \le n/2$ then 
        \begin{align*}
            |X_{i+1}| 
            \ge (1 + \rho(|X_i))|X_i|
            & = \left(1 + \frac{1}{256(\log(4|X_i|/t))^2}\right)|X_i| \\
            & \ge \left(1 + \frac{1}{256(\log(4n/t))^2}\right)|X_i|
            \ge \left(1 + \frac{1}{400(\log (n/t))^2}\right)|X_i|,
        \end{align*}
        using that $\log(4n/t) = 2 + \log(n/t) \le \frac{5}{4}\log(n/t)$ (where the inequality follows as $n/t \ge 2^8$).
        Hence, if $i$ satisfies $|X_{i-1}| \le n/2$, then
        \begin{equation*}
            |X_{i}| \ge \left(1 + \frac{1}{400(\log (n/t))^2}\right)^i |X|
            \ge \exp\left(\frac{i}{800(\log (n/t))^2}\right)|X|.
        \end{equation*}
        Write $\ell = 800(\log (n/t))^3$. We claim that $|X_{\ell}| > n/2$. Indeed, otherwise $|X_{\ell}| \ge \exp(\log (n/t))|X| > n$, a contradiction.
        An analogous argument shows that $|Y_{\ell}| > n/2$. It follows that there is a directed path of length at most $2\ell = 1600(\log (n/t))^3$ from $X$ to $Y$.
    \end{proof}

\end{document}